\documentclass[12pt]{amsart}
\usepackage{amsmath, amssymb, amsthm}
\usepackage{enumitem}                                 
\usepackage[margin=1.4in]{geometry}
\usepackage{comment}

\usepackage{xcolor}
\usepackage[pagebackref]{hyperref}
\hypersetup{
   colorlinks,
    linkcolor={red!60!black},
    citecolor={blue!60!black},
    urlcolor={blue!90!black}
}




\newtheorem{theorem}{Theorem}[section]
\newtheorem{remark} [theorem]{Remark}

\newtheorem{Counter-example}[theorem]{Counter example}

\newtheorem{lemma}[theorem]{Lemma}
\newtheorem{proposition}[theorem]{Proposition}

\newtheorem{definition}[theorem]{Definition}
\newtheorem{corollary}[theorem]{Corollary}

\newtheorem*{theorem*}{Theorem}

\def\N{\mathbb N}
\def\R{\mathbb R}

\newcommand{\e}{\varepsilon}

\newcommand{\supp}{\text{supp}}



\title[Projections of self-similar measures]{On the dimension of orthogonal projections of self-similar measures}

\author{Amir Algom}
\address{Department of Mathematics, University of Haifa at Oranim, Tivon 36006, Israel}
\email{amir.algom@math.haifa.ac.il}

\author{Pablo Shmerkin}
\address{Department of Mathematics, the University of British Columbia. 1984 Mathematics Road, Vancouver BC V6T 1Z2, Canada}
\email{pshmerkin@math.ubc.ca}
\thanks{A.A. was supported by Grant No. 2022034 from the United States - Israel Binational Science Foundation  (BSF), Jerusalem, Israel.\newline
P.S. was supported by an NSERC Discovery Grant.}

\date{\today}

\begin{document}

\begin{abstract}
    Let $\nu$ be a self similar measure on $\mathbb{R}^d$, $d\geq 2$, and let $\pi$ be an orthogonal projection onto a $k$-dimensional subspace. We formulate a criterion on the action of the group generated by the orthogonal parts of the IFS on $\pi$, and show that it ensures the dimension of $\pi \nu$ is preserved; this significantly refines previous results by Hochman-Shmerkin (2012) and Falconer-Jin (2014), and is sharp for projections to lines and hyperplanes. A key ingredient in the proof is an application of a restricted projection theorem of Gan-Guo-Wang (2024).
\end{abstract}

\maketitle

\section{Introduction} \label{Section intro}
\subsection{Background}
Let $\Gamma_{d,k}$ be the Grassmannian of $k$-dimensional subspaces of $\mathbb{R}^d$. It  is a smooth compact manifold that supports a natural measure $\gamma_{d,k}$ invariant under the action of the orthogonal group $\mathbb{O}(\R^d)$. This measure is also locally equivalent to the push-forward of Lebesgue measure under the chart maps. There is a canonical identification between $\Gamma_{d,k}$ and the family of orthogonal projections of $\mathbb{R}^d$ onto $k$-dimensional subspaces. We will often apply this identification implicitly.

A  fundamental Theorem of Marstrand and Mattila is that for any Borel set $K\subseteq\R^d$,
\begin{equation} \label{Eq Marstrand}
    \dim \pi K = \min \lbrace k,\, \dim K \rbrace,\, \text{ for } \gamma_{d,k} \text{-a.e. } \pi \in \Gamma_{d,k}.
\end{equation}
This paper is about a significantly improved measure-theoretic version of this result for self-similar measures (defined below). The key to our approach is to combine two recent breakthroughs regarding refinements of \eqref{Eq Marstrand},  albeit, a priori, in very different directions. Let us recall them.

The first is the  recently developed theory of restricted projections, pioneered by F\"{a}ssler and Orponen \cite{Fassler2014Orponen}. The original conjecture formulated by these authors is that for $d=3$, the projection theorem \eqref{Eq Marstrand} remains true for Lebesgue a.e. element in a one-dimensional parameterization (of lines or planes) corresponding to a $C^2$ curve on the unit sphere $\mathbb{S}^2$, such that the vectors generated by the curve and its first two derivatives span the whole of $\mathbb{R}^3$. A model example of such a curve is
\begin{equation} \label{Eq non-deg exp}
    \gamma(\theta) := \frac{1}{\sqrt{2}} \left( \cos \theta, \sin \theta, 1 \right),\quad \theta \in [0, 2 \pi).
\end{equation}
The conjecture was first fully verified for  line projections corresponding to the curve \eqref{Eq non-deg exp} by K{\"a}enm{\"a}ki, Orponen and Venieri \cite{kaenmaki2017marstrand}; the case of line projections was later resolved in full generality by Pramanik, Yang and Zahl \cite{pramanik2022furstenberg}. The case of plane projections was then resolved by Gan, Guo, Guth, Harris, Maldague and Wang \cite{gan2022restricted}. A very general version in higher dimensions $d\geq 3$ was later given by Gan-Guo-Wang \cite{wang2022restricted}; this is the version we will use in the present paper. For more recent results regarding restricted projections and related research, see \cite{Harris2022proj, Harris2023proj, harris2022length, Gan2024guth, Eas2014keleti, Orponen2015proj, Orponen2020ven}.

The second direction is about improving \eqref{Eq Marstrand} by assuming $K$ supports a measure with additional arithmetic or dynamical structure (in some sense), and then employing tools from ergodic theory. Here, the aim is to gain information about \textit{specific} (rather than typical) directions. This approach, pioneered by Furstenberg \cite{furstenberg2008ergodic, furstenberg1970intersections} and Gavish \cite{Gavish2008scaling}, and later significantly developed by Hochman-Shmerkin \cite{hochman2009local} and Hochman \cite{hochman2010dynamics}, concerns measures $\mu$ that are \textit{uniformly scaling} in a suitable sense. Informally, this means that for $\mu$-a.e. $x$, the family of measures one sees as one ``zooms'' into $x$, all the while re-scaling and conditioning the measure, equidistributes towards some distribution $P$ (supported on Borel probability measures). Hochman-Shmerkin \cite[Theorem 1.10]{hochman2009local} showed  that this implies the existence of a  lower semicontinuous map $E:\Gamma_{d,k}\rightarrow\R$ such that $\dim \pi \nu \geq E(\pi)$ for every $\pi \in \Gamma_{d,k}$, that enjoys additional invariance properties (for the definition of the dimension of a measure see Section \ref{Section dimension}). Among the many applications of this approach was the resolution of Furstenberg's sumset conjecture \cite[Theorem 1.3]{hochman2009local}. It was later shown by Falconer-Jin \cite{Falconer2014Jin} that such  projection Theorems for self-similar measures can be derived without using the scenery flow but rather by purely symbolic arguments. This way they were also able to dispose of the separation conditions originally imposed by Hochman-Shmerkin and Furstenberg for their use of the scaling scenery. It has been recently shown, however,  that in fact all self-similar measures are uniformly scaling \cite{barany2023scaling, Aleksi2021flow}. Nonetheless, we will follow here the approach of Falconer-Jin \cite{Falconer2014Jin}, with suitable variants and simplifications. For more recent related results regarding projections of dynamically defined measure, see \cite{Bruce2019jin, Falconer2015survey, bruce2022furstenberg, Aleksi2024reso}.

\subsection{Main results}
Let us now discuss self-similar sets and measures, the main subject of this paper: Let $\Lambda = \lbrace f_i(x)=r_i\cdot h_i\cdot x+t_i \rbrace_{i=1} ^n$ be a finite family of non-singular contracting similarity maps of $\mathbb{R}^d$; this means that $0<r_i<1$, $h_i \in \mathbb{O}(\mathbb{R}^d)$, and $t_i \in\R^d$. We call $\Lambda$ an \textit{IFS} - Iterated Function System.    It is well known that there exists a unique compact set $\emptyset \neq K=K_\Lambda \subseteq\R^d$ such that
\begin{equation} \label{Eq union}
    K = \bigcup_{i=1} ^n f_i (K).
\end{equation}
The set $K$ is called  the \textit{attractor} of the IFS $\Lambda$. If the union \eqref{Eq union} is disjoint we say that the SSC - strong separation condition, is met.  

Next, let $\textbf{p}=(p_1,...,p_n)$ be a strictly positive probability vector, that is, $p_i >0$ for all $i$ and $\sum_{i=1}^n p_i =1$. We call $\mathbf{\Lambda} = \{ (f_i,p_i)\}_{i=1}^n$ a \emph{weighted IFS} or WIFS. Then there exists a unique Borel probability  measure $\nu = \nu_{\mathbf{\Lambda}}$,  with topological support $K$,  such that
\[
    \nu = \sum_{i=1} ^n p_i\cdot  f_i\nu,\quad \text{ where } f_i \nu \text{ is the push-forward of } \nu \text{ via } f_i.
\]
The measure $\nu$ is called a \textit{self-similar measure}. It can be obtained as the push-forward of the $p$-Bernoulli measure $\overline{\nu}$ on the space of infinite sequences $\lbrace 1,...,n \rbrace^\mathbb{N}$, via the \emph{coding map} $\Phi: \lbrace 1,...,n \rbrace^\mathbb{N} \rightarrow K$ defined by
\begin{equation} \label{eq:coding-map}
    \Phi(\omega) = \lim_{n\rightarrow \infty} f_{\omega_1} \cdot \cdot \cdot f_{\omega_n} (0).
\end{equation}

Now, one of the major applications of the method of Hochman-Shmerkin \cite{hochman2009local} described above is the following strong version of \eqref{Eq Marstrand} for self similar measures: Let $\nu$ be a self-similar measure with the SSC, and assume that for some (equivalently, any) $\pi \in \Gamma_{d,k}$,
\begin{equation} \label{condition HS}
    \big\{  h_{i_1}\cdots h_{i_m}\pi:\, i_1,....,i_m \in \lbrace1,...,n\rbrace,\, m\in \mathbb{N} \big\} \text{ is dense in } \Gamma_{d,k}.
\end{equation}
Then for any $\pi \in \Gamma_{d,k}$, the projection $\pi\nu$ is exact dimensional and
\[
    \dim \pi \nu = \min \lbrace k,\, \dim \nu \rbrace.
\]
As we previously mentioned, Falconer and Jin \cite{Falconer2014Jin} were later able to  remove all separation assumptions.

The main result of this paper is a significant refinement of this result, that is achieved by combining the ergodic theoretic approach of Hochman-Shmerkin and Falconer-Jin together with  restricted projection theory. Recall that a smooth curve $\gamma:(-\e,\e)\rightarrow\R^d$ is called non-degenerate if
\[
    \text{span} \lbrace \gamma'(\theta),...,\gamma^{(d)} (\theta)\rbrace =\R^d,\quad \text{ for all } \theta\in (-\e,\e).
\]
We require the following variant of this definition:
\begin{definition} \label{Def non-deg}
    Let $\Gamma\subseteq \Gamma_{d,k}$ be a subset of the Grassmannian. A smooth curve $\gamma:(-\e,\e)\rightarrow\R^d$ is called  \emph{$\Gamma$-adapted} if:
    \begin{enumerate}
        \item It is non-degenerate; and

        \item it satisfies
              \[
                  \textup{span} \lbrace \gamma'(\theta),...,\gamma^{(k)} (\theta)\rbrace \subseteq \Gamma\quad\text{for all }\theta\in (-\e,\e).
              \]
    \end{enumerate}

\end{definition}

Any closed subgroup $G$ of $\mathbb{O}(\mathbb{R}^d)$ acts on $\Gamma_{d,k}$ by left multiplication (or by right composition if we think of elements of $\Gamma_{d,k}$ as projections). We will denote by $G\cdot\pi$ the orbit of $\pi\in\Gamma_{d,k}$ under this action. It is always a smooth compact embedded submanifold of $\Gamma_{d,k}$ (see e.g. \cite[Theorem 3.29]{kirillov2008}; the immersion is an embedding in this case since $G$ is compact). Our main technical result is the following:
\begin{theorem} \label{Main Tech theorem}
    Let $\nu$ be a self-similar measure on $\mathbb{R}^d$ generated by a weighted IFS $\mathbf{\Lambda}$. Let $G=G_{\mathbf{\Lambda}}$ denote the closed group generated by the orthogonal parts of the elements of $\mathbf{\Lambda}$.

    Let $\pi \in \Gamma_{d,k}$. If there exists a $G\cdot \pi$-adapted curve, then $\pi\nu$ is exact dimensional, and
    \begin{equation} \label{eq:dim-preserved}
        \dim \pi \nu = \min \lbrace k,\, \dim \nu \rbrace.
    \end{equation}
\end{theorem}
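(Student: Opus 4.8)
The plan is to prove the sharp identity \eqref{eq:dim-preserved} together with exact dimensionality by combining two essentially independent inputs. The upper bound $\dim\pi\nu\le\min\{k,\dim\nu\}$ is automatic, since an orthogonal projection onto a $k$-plane is $1$-Lipschitz with $k$-dimensional image, so the whole content is the lower bound. Writing $M=\min\{k,\dim\nu\}$, the two inputs are: (I) a symbolic--ergodic analysis in the spirit of Hochman--Shmerkin \cite{hochman2009local} and Falconer--Jin \cite{Falconer2014Jin}, establishing that $\pi\nu$ is exact dimensional and that the value $\dim\pi\nu$ is \emph{constant along the orbit} $G\cdot\pi$; and (II) the restricted projection theorem of Gan--Guo--Wang \cite{wang2022restricted}, which supplies a single good plane inside that orbit. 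These meet as follows: the adapted curve produces a one-parameter family of $k$-planes lying \emph{inside} $G\cdot\pi$ along which the restricted projection theorem forces the projected dimension to equal $M$; orbit-invariance from (I) then transports this value back to the given plane $\pi$.

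For input (I) I would code $\nu=\Phi\overline\nu$ and exploit the self-similar structure cylinder by cylinder. Conditioning on $\omega_1\cdots\omega_m$, the piece $f_{\omega_1}\cdots f_{\omega_m}\nu$ is a scaled, translated, rotated copy of $\nu$ whose orthogonal part is $g_m:=h_{\omega_1}\cdots h_{\omega_m}$; hence, after projecting, the local structure of $\pi\nu$ near $\pi\Phi(\omega)$ at scale $r_{\omega_1}\cdots r_{\omega_m}$ is a rescaled copy of the projection of $\nu$ onto the plane $g_m^{-1}\cdot\pi\in G\cdot\pi$. Consequently the local dimension of $\pi\nu$ at $\pi\Phi(\omega)$ is governed by the sequence $g_m^{-1}\cdot\pi$, which under the shift performs a random walk on the homogeneous space $G\cdot\pi\cong G/\operatorname{Stab}(\pi)$ with step distribution $\sum_i p_i\,\delta_{h_i^{-1}}$. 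Since the $h_i$ generate a dense subgroup of the compact group $G$, this walk has a unique stationary measure, namely the $G$-invariant measure on $G\cdot\pi$, and by the corresponding ergodic theorem the empirical distributions of $g_m^{-1}\cdot\pi$ equidistribute towards it for $\overline\nu$-a.e.\ $\omega$. A Maker-type ergodic theorem then yields that, for a.e.\ $\omega$, the local dimension converges to a single number, expressible as the integral over $G\cdot\pi$ of an entropy density against the invariant measure, normalized by the Lyapunov exponent $\chi=\sum_i p_i\log(1/r_i)$. This simultaneously gives exact dimensionality and, crucially, shows the value depends only on the orbit; as $G\cdot\pi=G\cdot(g\cdot\pi)$ for every $g\in G$, we obtain $\dim\pi\nu=\dim(g\cdot\pi)\nu$, i.e.\ orbit-invariance of the projected dimension.

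For input (II), let $\gamma$ be a $G\cdot\pi$-adapted curve and set $V(\theta)=\operatorname{span}\{\gamma'(\theta),\dots,\gamma^{(k)}(\theta)\}$, a $k$-plane lying in $G\cdot\pi$ by Definition~\ref{Def non-deg}(2), while non-degeneracy (Definition~\ref{Def non-deg}(1)) is exactly the curvature hypothesis under which \cite{wang2022restricted} applies. Applying their restricted projection theorem to $\nu$ — passing, if necessary, to Frostman subsets of $K$ of dimension arbitrarily close to $\dim\nu$ and using exact dimensionality to transfer the set statement to the measure — gives $\dim\,\pi_{V(\theta)}\nu=M$ for Lebesgue-a.e.\ $\theta\in(-\e,\e)$. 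Fixing one such $\theta_0$, we have $V(\theta_0)\in G\cdot\pi$, so $V(\theta_0)=g\cdot\pi$ for some $g\in G$, and orbit-invariance from (I) yields $\dim\pi\nu=\dim(g\cdot\pi)\nu=\dim\,\pi_{V(\theta_0)}\nu=M$, as required.

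The main obstacle is input (I): making rigorous that $\dim\pi\nu$ is an orbit integral, hence orbit-invariant. This requires (a) identifying the correct random walk on $G\cdot\pi$ and establishing uniqueness of its stationary measure together with a.e.\ equidistribution — where density of $\langle h_i\rangle$ in $G$ and an ergodicity/aperiodicity argument enter — and (b) upgrading stationarity to an exact pointwise local-dimension formula via a Maker-type ergodic theorem, while controlling the passage between dyadic and cylinder scales as in \cite{Falconer2014Jin}. By contrast, once orbit-invariance is in hand the restricted projection step is a direct appeal to \cite{wang2022restricted}, modulo the routine set-to-measure transfer. Note that it is essential here to track the \emph{actual} dimension $\dim\pi\nu$ rather than merely a lower-semicontinuous lower bound: the restricted projection theorem pins down the projected dimension from above along the curve, so only genuine invariance of $\dim\pi\nu$ can propagate the exact value to $\pi$.
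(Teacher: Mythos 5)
Your overall architecture (an ergodic ``constancy along the $G$-orbit'' statement glued to the Gan--Guo--Wang restricted projection theorem) is the same as the paper's, but your input (I) claims strictly more than what is actually proved, and your gluing step depends essentially on that extra strength. The paper's Theorem \ref{thm:constant-dim} establishes only that $\dim\pi g\nu=\alpha$ for $\lambda_G$-\emph{almost every} $g$ together with the one-sided bound $\dim\pi g\nu\ge\alpha$ for \emph{all} $g$; whether the dimension is genuinely constant on the whole orbit is explicitly flagged as an open problem (Remark \ref{thm:constant-dim}+1). Your route to full constancy --- an exact pointwise local-dimension formula for $\pi\nu$ as an orbit integral, obtained from the random walk $g_m^{-1}\cdot\pi$ plus a Maker-type ergodic theorem --- does not go through: without separation, the restriction of $\pi\nu$ to a ball around $\pi\Phi(\omega)$ is \emph{not} a rescaled copy of the projection of $\nu$ onto $g_m^{-1}\cdot\pi$, because projections of distinct cylinders overlap. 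This is precisely why the local entropy averages machinery of Hochman--Shmerkin and Falconer--Jin yields only a \emph{lower} bound on $\dim\pi g\nu$ (Lemma \ref{lem:local-entropy-averages} and Proposition \ref{prop:application-local-entropy-avg}), not an identity. With only the a.e.\ statement available, your final step fails: the good parameters $\theta$ supplied by the restricted projection theorem trace out a one-dimensional curve in $G\cdot\pi$, which is $\lambda_G$-null whenever $\dim G\cdot\pi>1$, so a single good $\theta_0$ (or even a full-measure set of $\theta$ on the curve) cannot certify that the a.e.\ value $\alpha$ equals $M$. The paper's fix is a Fubini argument: apply the measure-theoretic GGW theorem to every translate $g\cdot\gamma$ of the adapted curve and push $m\times\lambda_G$ forward under $(\theta,g)\mapsto g\sigma(\theta)$ to sweep out full Haar measure, which identifies $\alpha=\min\{k,\dim\nu\}$; the ``$\ge\alpha$ for all $g$'' half then transports the value to $\pi$ itself. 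Exact dimensionality is then obtained cheaply from the trivial a.e.\ upper bound on local dimension (via exact dimensionality of $\nu$ and the Lipschitz property of $\pi$), not from an exact local-dimension formula.

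A secondary gap: you describe the passage from the Gan--Guo--Wang \emph{set} theorem to the statement $\dim\Pi^{(k)}_\theta\nu=M$ for a.e.\ $\theta$ as a ``routine set-to-measure transfer'' via Frostman subsets. It is not routine: lower-bounding $\dim\Pi^{(k)}_\theta\mu$ requires controlling \emph{every} set of positive projected measure, and the exceptional set of $\theta$ produced by the set theorem depends on the target set, so it cannot be intersected over uncountably many candidates. The paper devotes Section \ref{Section GGW} to proving a genuine measure-theoretic version (Theorem \ref{Theorem Gan measures}), which requires re-running the argument with the quantitative estimate \cite[Eq.\ (2.7)]{wang2022restricted} (Theorem \ref{Coro eq. 2.7}) as a black box, followed by the Egorov-type reduction of Lemma \ref{lem:exact dim}.
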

Taking an arbitrary non-degenerate curve in $\mathbb{R}^d$, we see that Theorem \ref{Main Tech theorem} implies the results of Hochman-Shmerkin \cite{hochman2009local} and Falconer-Jin \cite{Falconer2014Jin} mentioned above as special cases. It also covers many new examples, as we now discuss.

For projections to lines and hyperplanes, we obtain sharp conditions for \eqref{eq:dim-preserved} to hold:
\begin{corollary} \label{cor:proj-dim-codim-1}
    Let $\nu$ be a self-similar measure on $\mathbb{R}^d$ generated by a weighted IFS $\mathbf{\Lambda}$. Let $G=G_{\mathbf{\Lambda}}$ denote the closed group generated by the orthogonal parts of the elements of $\mathbf{\Lambda}$.

    Let $\pi \in \Gamma_{d,1}$. Assume that the linear span of the lines $\{ g\pi: g\in G\}$ equals $\mathbb{R}^d$. Then
    \begin{align*}
        \dim \pi \nu         & = \min \lbrace 1,\, \dim \nu \rbrace,   \\
        \dim \pi^{\perp} \nu & = \min \lbrace d-1,\, \dim \nu \rbrace.
    \end{align*}
\end{corollary}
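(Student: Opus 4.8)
The plan is to deduce both equalities from Theorem \ref{Main Tech theorem}, by exhibiting a $G\cdot\pi$-adapted curve for the first and a $G\cdot\pi^\perp$-adapted curve for the second. Fix a unit vector $q$ spanning the line $\pi$, and let $O:=G\cdot\pi\subseteq\Gamma_{d,1}$, with lift $\tilde O:=\{v\in\mathbb{S}^{d-1}:[v]\in O\}$; the hypothesis says exactly that $\tilde O$ spans $\R^d$, since the direction of $g\pi$ is $gq$. Everything is built from a single object: a smooth curve $u:(-\e,\e)\to\tilde O$ whose derivatives $u(\theta),u'(\theta),\dots,u^{(d-1)}(\theta)$ are linearly independent (it suffices to arrange this at one point, independence being an open condition). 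I would obtain $u$ from the spanning hypothesis via the key lemma discussed below, and this is where the real work lies. Since a curve lies in a single connected component, I will in fact need the identity component's orbit $\tilde O_0$ (the $G_0$-orbit of $q$) to span $\R^d$, and I assume this for now.

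For the line projection, I set $\gamma(\theta):=\int_0^\theta u(s)\,ds$. Then $\gamma'=u$, so $\gamma^{(j)}=u^{(j-1)}$ and the vectors $\gamma'(\theta),\dots,\gamma^{(d)}(\theta)$ coincide with $u(\theta),\dots,u^{(d-1)}(\theta)$, which are independent; hence $\gamma$ is non-degenerate. Moreover $\operatorname{span}\{\gamma'(\theta)\}=[u(\theta)]\in O=G\cdot\pi$, so $\gamma$ is $G\cdot\pi$-adapted in the sense of Definition \ref{Def non-deg} (with $k=1$). Theorem \ref{Main Tech theorem} then gives $\dim\pi\nu=\min\{1,\dim\nu\}$.

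For the hyperplane projection I would use the same $\gamma$. Let $N(\theta)$ be the unit normal to the osculating hyperplane $\operatorname{span}\{\gamma'(\theta),\dots,\gamma^{(d-1)}(\theta)\}$ of $\gamma$ (well-defined and smooth because $\gamma$ is non-degenerate), and set $\eta(\theta):=\int_0^\theta N(s)\,ds$. Writing $e_1,\dots,e_d$ for the Frenet frame of $\gamma$ (so $N=e_d$ and $e_1=u$), the Frenet equations give $\eta^{(j)}=e_d^{(j-1)}\in\operatorname{span}\{e_{d},e_{d-1},\dots,e_{d-j+1}\}$, whence $\operatorname{span}\{\eta'(\theta),\dots,\eta^{(d-1)}(\theta)\}=\operatorname{span}\{e_2,\dots,e_d\}=e_1(\theta)^\perp=u(\theta)^\perp$, and $\eta$ is non-degenerate (its top derivative $\eta^{(d)}=e_d^{(d-1)}$ acquires a nonzero $e_1$-component). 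Since $[u(\theta)]\in G\cdot\pi$ and $(g\pi)^\perp=g\,\pi^\perp$ for $g\in\mathbb{O}(\R^d)$, we get $u(\theta)^\perp\in(G\cdot\pi)^\perp=G\cdot\pi^\perp$; thus $\eta$ is $G\cdot\pi^\perp$-adapted (with $k=d-1$), and Theorem \ref{Main Tech theorem} yields $\dim\pi^\perp\nu=\min\{d-1,\dim\nu\}$.

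The crux, and the step I expect to be the main obstacle, is the key lemma: a connected real-analytic submanifold $\tilde O_0\subseteq\mathbb{S}^{d-1}$ that spans $\R^d$ carries a curve $u$ with $u,u',\dots,u^{(d-1)}$ linearly independent. I would attack it through the osculating spaces $O_j(q):=\operatorname{span}\{u^{(i)}(0):0\le i\le j\}$, taken over all curves $u$ in $\tilde O_0$ with $u(0)=q$. Analyticity forces this flag to strictly increase until it fills $\R^d$ (if it stabilized at a proper subspace $V$, every curve through $q$, hence all of $\tilde O_0$, would lie in $V$), so $O_{d-1}(q)=\R^d$. The delicate point is to realize the full osculating space along a \emph{single} curve; I would argue that the locus where the $(d-1)$-jet fails to span is a proper analytic subset of the jet space of $\tilde O_0$ at $q$. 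I stress that this cannot be reduced to one-parameter subgroups $\theta\mapsto\exp(\theta X)q$: already for $G=SU(2)$ acting on $\R^4\cong\mathbb{H}$ one has $X^2q\in\R q$ for every $X\in\mathfrak{g}$, so such arcs span only a plane, even though the orbit is all of $\mathbb{S}^3$ and carries many non-degenerate curves. Finally, when the identity component's orbit $\tilde O_0$ fails to span $\R^d$ while $\tilde O$ does — in particular when $G$ is finite, so that $\tilde O$ is a finite set carrying no adapted curve at all — the construction above does not apply, and this case would have to be handled by a separate argument.
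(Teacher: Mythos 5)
Your overall architecture is the same as the paper's: for the line projection, integrate a spherical curve lying in the orbit $\{gv:g\in G^{\circ}\}$ whose first $d-1$ derivatives together with the curve itself span $\mathbb{R}^d$; for the hyperplane projection, pass to the curve obtained by integrating the unit normal $e_d$ of the osculating hyperplane, whose non-degeneracy and osculating $(d-1)$-plane $e_1^{\perp}$ you compute exactly as the paper does via the reversed Frenet--Serret frame. Those two reductions to Theorem \ref{Main Tech theorem} are correct as written.

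The genuine gap is in your ``key lemma'', i.e.\ the existence of a curve $u$ in the orbit with $u,u',\dots,u^{(d-1)}$ independent, which you yourself identify as the main obstacle and do not close. Two specific problems: (a) defining $O_j(q)$ as the span of $j$-th order data over \emph{all} curves through $q$, the stabilization argument only controls that union, not any individual curve -- it is a priori consistent with every single curve having a degenerate flag while different curves degenerate along different subspaces whose union spans; (b) the assertion that the degenerate $(d-1)$-jets form a \emph{proper} analytic subset of the jet space is exactly the statement that some jet is non-degenerate, so as phrased the argument is circular. The paper avoids all of this with a more elementary device: since the orbit $G^{\circ}\cdot v$ spans, one can choose a smooth unit-speed curve $\dot\gamma$ in it passing through $d$ linearly independent points, hence not contained in any linear hyperplane; then, assuming inductively that the first $d-1$ derivatives of $\gamma$ are independent, the Frenet--Serret equations show that if the last curvature $\kappa_{d-1}$ vanished identically then $e_d$ would be constant with $\dot\gamma\cdot e_d\equiv 0$, contradicting the spanning property, so non-degeneracy holds on a subinterval. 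Adopting this step would complete your proof. Separately, your observation that the argument really needs the \emph{identity component's} orbit to span (and breaks down, e.g., for finite $G$) is accurate; the paper's own proof silently makes the same reduction when it asserts that $G^{\circ}\cdot v$ is ``by assumption'' not contained in a hyperplane, so this caveat applies to both arguments rather than being a defect of yours alone.
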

This result is sharp in the following  sense. Let $\{ h_i\}_{i=1}^n$ be orthogonal maps generating a subgroup whose closure is $G$. Let $\pi \in \Gamma_{d,1}$ and  let $W\subseteq \mathbb{R}^d$ be the linear span of the lines $\{ g\pi: g\in G\}$, and assume $\dim W \leq d-1$. Let $\Phi$ be an IFS comprised of maps of the form $\lbrace \lambda\cdot h_i\cdot x+t_i\rbrace_{i=1} ^{n}$, and suppose there are finite sets $U\subseteq W, V\subseteq W^\perp$ such that $U+ W=\lbrace t_i \rbrace$, and $|U|,|W|\geq 2$. If needed, we can increase $n$ by allowing for repetition between the $h_i$s, that is we can have $i\neq j$ and $h_i = h_j$. Thus, taking $\lambda>0$ sufficiently small, there exists a self-similar measure $\nu$ with the SSC such that $0<\dim \nu <1$, and $\nu=\mu_1\times \mu_2$ where the product structure is with respect to the direct sum $W\oplus W^\perp$, $\mu_1$ is a self-similar measure on $W$, $\mu_2$ a self-similar measure on $W^\perp$, both with positive dimension and the SSC. Let $\pi'$ be a line in $W$ (in particular, we can take $\pi' = \pi$). Then $\pi' \nu = \pi' \mu_1$ so $\dim \pi' \nu \leq \dim \mu_1 < \dim \nu$, whence \eqref{eq:dim-preserved}  fails.

An obvious candidate for the curve in Theorem \ref{Main Tech theorem} is the push-forward of a one-parameter subgroup of $G$. Using such curves, we obtain:
\begin{corollary} \label{cor:one-parameter}
    Let $\nu$ be a self-similar measure on $\mathbb{R}^{d}$ generated by a weighted IFS $\mathbf{\Lambda}$. Let $G=G_{\mathbf{\Lambda}}$ denote the closed group generated by the orthogonal parts of the elements of $\mathbf{\Lambda}$. Let $\mathfrak{g}$ be its Lie algebra.

    Suppose $A\in\mathfrak{g}$ and $v\in\mathbb{R}^d$ is a cyclic vector for $A$ (i.e. $v$ is not contained in any proper $A$-invariant subspace). Then, for any $1\le k\le d-1$, we have
    \[
        \dim\pi_k \nu = \min\{k,\dim\nu\},
    \]
    where $\pi_k = \textup{span}\{v,Av,...,A^{k-1}v\}$.
\end{corollary}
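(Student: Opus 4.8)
The plan is to deduce Corollary \ref{cor:one-parameter} from Theorem \ref{Main Tech theorem} by exhibiting a $G\cdot\pi_k$-adapted curve. Since $A\in\mathfrak{g}$, the one-parameter subgroup $t\mapsto \exp(tA)$ lies in $G$, so the curve $\theta\mapsto \exp(\theta A)v$ traces out a path whose value at each $\theta$ lies in some subspace of the form $(\exp(\theta A))\cdot(\text{initial data})$. The natural candidate is therefore
\[
    \gamma(\theta) := \exp(\theta A)\,v,\qquad \theta\in(-\e,\e).
\]
The first thing I would do is compute its derivatives: since $\gamma^{(j)}(\theta)=A^j\exp(\theta A)v=\exp(\theta A)A^j v$ (using that $A$ commutes with $\exp(\theta A)$), we get at a general $\theta$
\[
    \gamma^{(j)}(\theta)=\exp(\theta A)\,A^j v.
\]

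Next I would verify the two conditions of Definition \ref{Def non-deg}. For non-degeneracy we need $\text{span}\{\gamma'(\theta),\dots,\gamma^{(d)}(\theta)\}=\R^d$ for all $\theta$. Because $\exp(\theta A)$ is invertible, this span equals $\exp(\theta A)\cdot\text{span}\{Av,A^2v,\dots,A^dv\}$, so it suffices to show $\text{span}\{Av,\dots,A^dv\}=\R^d$. Here I would invoke the cyclicity hypothesis: $v$ being a cyclic vector for $A$ means $\{v,Av,\dots,A^{d-1}v\}$ spans $\R^d$, equivalently the minimal polynomial of $A$ at $v$ has degree $d$. Since $A$ is skew-symmetric (being in the Lie algebra of $\mathbb{O}(\R^d)$), it is invertible iff it has no zero eigenvalue; I would need to confirm that cyclicity forces invertibility, or more directly argue that $\{Av,\dots,A^dv\}$ spans. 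The cleanest route is to note that cyclicity gives $\R^d=\text{span}\{A^jv:j\ge 0\}$, and then use that a skew-symmetric matrix admitting a cyclic vector over $\R^d$ must have trivial kernel when $d$ is even (its eigenvalues are purely imaginary pairs, so a cyclic vector forces $d$ even and $\ker A=0$); this invertibility lets me strip off the leading $v$ and conclude $\{Av,\dots,A^dv\}$ still spans. I expect this spanning/invertibility bookkeeping to be the main technical point to get right.

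For the second, \emph{adaptedness} condition I must show that for the projection $\pi_k=\text{span}\{v,Av,\dots,A^{k-1}v\}$, the $k$-plane $\text{span}\{\gamma'(\theta),\dots,\gamma^{(k)}(\theta)\}$ lies in the orbit $G\cdot\pi_k$. By the derivative computation this $k$-plane equals
\[
    \exp(\theta A)\cdot\text{span}\{Av,A^2v,\dots,A^kv\}.
\]
The subtlety is that this is $\exp(\theta A)$ applied to $\text{span}\{Av,\dots,A^kv\}$, whereas $\pi_k$ is $\text{span}\{v,\dots,A^{k-1}v\}$; these are shifted by one power of $A$. I would resolve this by first reducing, at $\theta=0$, to showing $\text{span}\{Av,\dots,A^kv\}\in G\cdot\pi_k$. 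Since $A$ is invertible (by the previous paragraph) and $\exp(sA)\in G$ for all $s$, differentiating shows the tangent space to the orbit $G\cdot\pi_k$ at $\pi_k$ contains the infinitesimal direction generated by $A$; more usefully, I would observe that $\text{span}\{Av,\dots,A^kv\}=A\cdot\pi_k$, and argue that the curve $s\mapsto\exp(sA)\pi_k$ already sweeps through a one-parameter family inside $G\cdot\pi_k$ whose velocity at each point matches $\gamma^{(j)}$, so that reparametrizing by $\gamma$ itself keeps us inside the orbit. In short, every $k$-plane of the form $\exp(\theta A)\cdot\text{span}\{A^jv:1\le j\le k\}$ is manifestly in $G\cdot\pi_k$ provided $\text{span}\{A^jv:1\le j\le k\}$ itself is, and the latter follows because $A\cdot\pi_k=\text{span}\{Av,\dots,A^kv\}$ is carried into $G\cdot\pi_k$ by noting $A$ lies in the closure of the group directions via $\exp$.

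Once both conditions are checked, $\gamma$ is $G\cdot\pi_k$-adapted, and Theorem \ref{Main Tech theorem} applies directly to give $\dim\pi_k\nu=\min\{k,\dim\nu\}$. I expect the genuinely delicate step to be the adaptedness verification, specifically matching the shifted span $\text{span}\{Av,\dots,A^kv\}$ against the orbit of $\pi_k=\text{span}\{v,\dots,A^{k-1}v\}$ and confirming the whole derivative flag stays inside $G\cdot\pi_k$ for every $\theta$ rather than just at $\theta=0$; the non-degeneracy and the use of skew-symmetry to extract invertibility of $A$ from cyclicity are the supporting lemmas I would isolate first.
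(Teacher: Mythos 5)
Your choice of curve is off by one derivative from the paper's, and this creates a genuine gap. The paper's proof sets $\dot\gamma(t)=e^{tA}v$ (so the curve is the \emph{antiderivative} of the orbit curve $t\mapsto e^{tA}v$), giving $\gamma^{(j)}(t)=e^{tA}A^{j-1}v$; then $\mathrm{span}\{\gamma'(t),\dots,\gamma^{(d)}(t)\}=e^{tA}\,\mathrm{span}\{v,Av,\dots,A^{d-1}v\}=\R^d$ directly by cyclicity, and the osculating $k$-plane is exactly $e^{tA}\pi_k\in G\cdot\pi_k$, so adaptedness is immediate. You instead take $\gamma(\theta)=e^{\theta A}v$ itself, so your derivative flag is $e^{\theta A}\,\mathrm{span}\{Av,\dots,A^jv\}$, which forces you to prove (a) that $\{Av,\dots,A^dv\}$ spans $\R^d$, and (b) that $\mathrm{span}\{Av,\dots,A^kv\}=A\pi_k$ lies in the orbit $G\cdot\pi_k$. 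Both fail in general.

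For (a): the claim that a skew-symmetric $A$ admitting a cyclic vector must be invertible (and that cyclicity forces $d$ even) is false. Take $d=3$, $A$ the infinitesimal rotation about the $z$-axis, and $v=\tfrac{1}{\sqrt{2}}(1,0,1)$: then $v,Av,A^2v$ span $\R^3$, so $v$ is cyclic, yet $Av,A^2v,A^3v$ all lie in the $xy$-plane, so your $\gamma$ is degenerate. This is precisely the model example \eqref{Eq non-deg exp} of restricted projection theory, i.e.\ the most important case the corollary is meant to cover. (In general a real skew-symmetric matrix has even rank, so for odd $d$ a cyclic vector coexists with a one-dimensional kernel.) For (b): $A$ lies in the Lie algebra, not in $G$, so $A\pi_k$ has no reason to belong to $G\cdot\pi_k$; in the same example with $k=1$, the orbit $G\cdot\pi_1$ consists of the lines $\mathrm{span}\{(\cos\theta,\sin\theta,1)\}$, while $A\pi_1=\mathrm{span}\{(0,1,0)\}$ is horizontal and hence not in the orbit. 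The appeal to ``$A$ lies in the closure of the group directions via $\exp$'' does not repair this. The fix is exactly the one-derivative shift: define $\gamma(\theta)=\int_0^\theta e^{sA}v\,ds$; then both conditions of Definition \ref{Def non-deg} follow in one line (no invertibility of $A$ is needed) and Theorem \ref{Main Tech theorem} applies.
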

Note that we can state the assumption in terms of the group $G$ itself: if $v$ is a cyclic vector for $g\in G^{\circ}$ where  $G^{\circ}$ is the  identity component, then it is also a cyclic vector for any logarithm $A$ of $g$ in the Lie algebra. We also remark that some connected subgroups of $\mathbb{O}(\R^d)$ do not contain any matrices with cyclic vectors (e.g. if they fix a subspace of dimension $2$). Nonetheless, many of them do, while being far from acting transitively on $\Gamma_{d,k}$. So, this corollary again provides many new examples not covered by \cite{hochman2009local, Falconer2014Jin}.

As a specific example, consider the case that $G\subset\mathbb{O}(\R^4)$ is (or contains) a maximal torus (i.e. independent rotations around two orthogonal $2$-planes). After an orthogonal change of coordinates, the Lie algebra $\mathfrak{g}$ consists of matrices of the form
\[
    \begin{pmatrix}
        0         & -\lambda_1 & 0         & 0          \\
        \lambda_1 & 0          & 0         & 0          \\
        0         & 0          & 0         & -\lambda_2 \\
        0         & 0          & \lambda_2 & 0
    \end{pmatrix},\quad \lambda_i\in\R.
\]
Any vector $x=(x_1,x_2,x_3,x_4)$ with $(x_1,x_2),(x_3,x_4)\neq (0,0)$ is cyclic for all such matrices with $\lambda_i\neq 0$, $i=1,2$. Thus, for any such $x$ and any $\lambda\neq 0$, the $2$-planes $\pi=\pi_{x,\lambda}$ of the form
\[
    \textup{span}\{ (x_1,x_2,x_3,x_4), (- x_2,x_1, -\lambda x_4, \lambda x_3)  \}
\]
satisfy $\dim\pi\nu = \min\{2,\dim\nu\}$. It is not too hard to check that all such planes form a $3$-dimensional submanifold of the $4$-dimensional space $\Gamma_{4,2}$. We do not know whether dimension preservation holds for an open subset of $\Gamma_{4,2}$ in this case.

\subsection{Proof sketch and organization} After some brief preliminaries in Section  \ref{Section dimension}, we proceed to prove a measure theoretic version of the Gan-Guo-Wang projection Theorem in Section \ref{Section GGW}. The key ingredient here is the main technical result from \cite{wang2022restricted}, Theorem \ref{Coro eq. 2.7}. Then, in Section \ref{Section EFD}, we turn to projections of self-similar measures, and prove that the dimension of their projections are almost constant along $G$-orbits. This is Theorem \ref{thm:constant-dim}, which appears to be new. The proof relies heavily on local entropy averages, similarly to Hochman-Shmerkin \cite{hochman2009local} and Falconer-Jin \cite{Falconer2014Jin}. We then combine these two tools to prove Theorem \ref{Main Tech theorem} in Section \ref{Section proof}, where we also prove Corollary \ref{cor:one-parameter} and \ref{cor:proj-dim-codim-1}.

\subsection*{Acknowledgements} We thank Hong Wang for helpful discussions on restricted projections, and Meng Wu for his comments on an earlier version of this project.

\section{Preliminaries on the dimension theory of measures} \label{Section dimension}
For a Borel set $A$ in some metric space $X$, we denote by $\dim A$ its Hausdorff dimension, and by $\mathcal{P}(X)$ the space of Borel probability measures supported on $X$. Now,  let $\mu \in \mathcal{P}(X)$.   For every $x\in \supp(\mu)$  we define the lower local (pointwise) dimension of $\mu$ at $x$  as
\begin{equation*}
    \dim(\mu,x)=\liminf_{r\rightarrow 0} \frac{\log \mu (B(x,r))}{\log r}
\end{equation*}
where $B(x,r)$ denotes the closed ball or radius $r$ about $x$. The Hausdorff dimension of $\mu$ is defined as (and is also equal to)
\begin{equation} \label{Eq lower dim}
    \dim \mu := \text{ess-inf}_{x\sim \mu} \dim(\mu,x) = \inf \lbrace \dim A:\, \mu (A)>0 \rbrace.
\end{equation}
See \cite[Chapter 10]{falconer1997techniques} for more details about the second equality in \eqref{Eq lower dim}.

We note that $\dim$ is a Borel function of $\mu$ in the weak$^*$ topology. Indeed, by approximating indicators of balls by continuous functions, we have that $\dim(\mu,x)$ is Borel for each $x$. Then note that
\[
    \dim\mu = \inf_f \frac{\int f(x) \dim(\mu,x)d\mu}{\int f(x) d\mu},
\]
where the infimum is taken over a countable uniformly dense subset of $C^{+}(U)$, where $U$ is a neighborhood of $\supp(\mu)$, and only functions for which the denominator is positive are considered.

If $\dim (\mu,x)$ exists as a limit at almost every point, and is constant almost surely, we shall say that the measure $\mu$ is exact dimensional. In this case, most metric definitions of the dimension of $\mu$ coincide (e.g. lower and upper Hausdorff dimension and packing dimension). It is  known that self-similar measures are always exact-dimensional \cite{feng2009dimension}.

In this paper we call a measure $\mu\in \mathcal{P}(X)$  an $\alpha$-Frostman measure if   there is some $C>0$ such that
\[
    \mu(B(x,r))\leq C\cdot r^\alpha
\]
for every $x\in \supp(\mu)$ and $r>0$.  It is well known that a self-similar measure with respect to a strongly separated IFS $\mathbf{\Lambda}$ such that $\dim \nu = \dim K_\mathbf{\Lambda}$  is a $\dim K_\mathbf{\Lambda}$-Frostman measure. In general, exact dimensional measures (even self-similar measures) may fail the Frostman condition, but we do have the following useful standard property (which follows from Egorov's Theorem):
\begin{lemma} \label{lem:exact dim}
    Let $\mu \in \mathcal{P}(\mathbb{R}^d)$ be a measure with $\dim \mu = \alpha>0$. Then for all $n\in \mathbb{N}$ there exists a set $E_n \subseteq\R^d$ such that:
    \begin{enumerate}
        \item $\mu (E_n)\geq 1- \frac{1}{n}$.

        \item There exists $C=C(n,\mu)\in\R_+$ such that
              $$ \mu(B(x,r)\cap E_n)\leq C\cdot r^{\alpha-\frac{1}{n}},\, \text{ for all } x\in E_n \text{ and } r>0.$$
    \end{enumerate}
\end{lemma}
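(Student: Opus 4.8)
The plan is to unwind the definition $\dim\mu=\alpha$ into a pointwise Frostman bound valid below an $x$-dependent scale, and then to upgrade that scale to a uniform one on a set of measure at least $1-\tfrac1n$ via an Egorov-type argument. I would first reduce to $n>1/\alpha$, so that the target exponent $\beta:=\alpha-\tfrac1n$ lies in $(0,\alpha)$; this is the only regime in which item (2) is meaningful, since for $r\to\infty$ a negative exponent would force $\mu(E_n)=0$. By \eqref{Eq lower dim}, $\dim\mu=\alpha$ means $\dim(\mu,x)\ge\alpha>\beta$ for $\mu$-a.e.\ $x$, i.e.\ $\liminf_{r\to 0}\frac{\log\mu(B(x,r))}{\log r}\ge\alpha$ almost surely. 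Equivalently, for a.e.\ $x$ there is a scale $r_0(x)>0$ with $\mu(B(x,r))\le r^{\beta}$ for all $0<r\le r_0(x)$. The only obstacle to concluding immediately is that $r_0(x)$ need not be bounded below.

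To remove this non-uniformity I would introduce the nested ``good down to scale $1/m$'' sets
\[
  A_m := \{\, x : \mu(B(x,r))\le r^{\beta}\ \text{for all } 0<r\le 1/m \,\}.
\]
These are Borel: since $r\mapsto\mu(B(x,r))$ is right-continuous for closed balls (by continuity of measure from above, as $\bigcap_{r>r_0}B(x,r)=B(x,r_0)$), the defining condition is equivalent to its restriction to rational $r\in(0,1/m]$, so $A_m$ is a countable intersection of the Borel sets $\{x:\mu(B(x,q))\le q^{\beta}\}$, where measurability of $x\mapsto\mu(B(x,q))$ is the same approximation-by-continuous-functions observation already used in the text for $\dim(\mu,\cdot)$. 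The $A_m$ increase in $m$, and by the previous paragraph $\mu\big(\bigcup_m A_m\big)=1$. By continuity of measure from below there is $M=M(n,\mu)$ with $\mu(A_M)\ge 1-\tfrac1n$; set $E_n:=A_M$. This nested-set construction is precisely the monotone (Egorov-type) passage from the pointwise bound to a uniform scale.

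It then remains to verify item (2) for $E_n$, splitting by scale. For $x\in E_n$ and $0<r\le 1/M$ we have directly $\mu(B(x,r)\cap E_n)\le\mu(B(x,r))\le r^{\beta}$, so $C=1$ suffices there. For $r>1/M$ we use the trivial bound $\mu(B(x,r)\cap E_n)\le 1$ together with $r^{\beta}\ge M^{-\beta}$ (valid since $\beta>0$), giving $\mu(B(x,r)\cap E_n)\le M^{\beta}r^{\beta}$. Taking $C:=\max\{1,M^{\beta}\}$ then yields the bound for all $r>0$.

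The genuine content of the lemma, and the step I expect to be the main point, is the passage from the almost-everywhere pointwise statement (the Frostman bound holding only below an uncontrolled scale $r_0(x)$) to a uniform statement on a large set — exactly the Egorov/monotone-convergence upgrade encoded by the sets $A_m$ and continuity of measure. Everything else is routine bookkeeping; the only secondary point requiring care is the large-scale regime $r>1/M$, where one must invoke the positivity of $\beta$, which is also why the statement is only sensible for $n>1/\alpha$.
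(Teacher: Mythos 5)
Your proof is correct and is precisely the Egorov-type argument the paper has in mind (the lemma is stated without proof, attributed to Egorov's theorem): the monotone sets $A_m$ together with continuity of measure from below are the standard upgrade from the a.e.\ pointwise Frostman bound below an $x$-dependent scale to a uniform scale on a set of large measure, and you handle the large-$r$ regime correctly using $\beta>0$. Your side remark that the statement is only sensible for $n>1/\alpha$ is also accurate and harmless, since the lemma is only ever invoked for large $n$ (via $\liminf_n X_n$) in the proof of Theorem \ref{Theorem Gan measures}.
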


\section{A measure theoretic version of the Gan-Guo-Wang projection Theorem} \label{Section GGW}
%

Let $\gamma:(-\e,\e)\rightarrow\R^d$ be a smooth non-degenerate curve in the sense that
\[
    \det[ \gamma'(\theta),\ldots,\gamma^{(d)} (\theta)] \neq 0,\quad \text{ for all } \theta\in (-\e,\e).
\]
For $1\leq k \leq d$ define the $k$-th order tangent (or osculating) space of $\gamma$ at $\theta$ by
\[
    \Pi_\theta ^{(k)} :=\text{span}\{ \gamma'(\theta),\ldots,\gamma^{(k)} (\theta)\},
\]
where, as usual, we identify a plane with the orthogonal projection to it. The main result of Gan-Guo-Wang \cite{wang2022restricted}, that was already mentioned in the introduction, is a higher dimensional version of the restricted projections conjecture:
\begin{theorem}[{\cite[Theorem 1.2]{wang2022restricted}}] \label{Theorem Gan}
    Let $d\geq 2, 1\leq k \leq d$, and let $\gamma:(-\e,\e)\rightarrow\R^d$ be a smooth non-degenerate curve. Let $E\subseteq [0,1]^d$ be a Borel measurable set. Then
    \begin{equation}
        \dim \left( \Pi_\theta ^{(k)} \left( E \right) \right) = \min \lbrace \dim(E),\,k\rbrace,\quad \text{ for a.e. }\theta\in (-\e,\e).
    \end{equation}

\end{theorem}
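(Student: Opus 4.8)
The displayed statement is quoted from \cite{wang2022restricted}, so a genuinely self-contained proof is out of reach here; what I can offer is the strategy by which a restricted projection theorem of this kind is established, and where its difficulty lies. The plan is to pass from sets to measures. Fix $s<\min\{\dim E,k\}$; by Frostman's lemma there is a compactly supported $s$-Frostman measure $\mu$ with $\supp(\mu)\subseteq E$. The upper bound $\dim \Pi_\theta^{(k)}(E)\le \min\{\dim E,k\}$ is automatic, since $\Pi_\theta^{(k)}$ is $1$-Lipschitz with $k$-dimensional image, so the entire content is the lower bound $\dim \Pi_\theta^{(k)}\mu \ge s$ for a.e.\ $\theta$. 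Letting $s\uparrow\min\{\dim E,k\}$ along a countable sequence then yields the theorem on a full-measure set of $\theta$.

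To get the lower bound I would control the Riesz energy: if
\[
    \int_{-\e}^{\e} I_t\big(\Pi_\theta^{(k)}\mu\big)\,d\theta = \int_{-\e}^{\e}\iint \frac{d\mu(x)\,d\mu(y)}{|\Pi_\theta^{(k)}(x-y)|^{t}}\,d\theta <\infty
\]
for every $t<s$, then $I_t(\Pi_\theta^{(k)}\mu)<\infty$ for a.e.\ $\theta$, forcing $\dim \Pi_\theta^{(k)}\mu\ge t$, and letting $t\uparrow s$ finishes. By Fubini this hinges on the single-vector average $\int_{-\e}^{\e}|\Pi_\theta^{(k)}(z)|^{-t}\,d\theta$ with $z=x-y$. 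Here the classical Marstrand--Mattila method fails: along a one-parameter curve there is no rotational symmetry to exploit, and near a parameter $\theta_0$ with $z\perp \Pi_{\theta_0}^{(k)}$ one has $|\Pi_\theta^{(k)}(z)|\asymp |z|\,|\theta-\theta_0|$ to first order, so the single-vector integral converges only for $t<1$. Thus a pairwise estimate recovers nothing beyond $\dim\ge\min\{\dim E,1\}$, and the passage to the sharp exponent $k$ cannot happen one pair $(x,y)$ at a time.

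The mechanism that supplies the missing gain, and the step I expect to be the main obstacle, is the non-degeneracy hypothesis. Non-degeneracy says the osculating frame $\gamma'(\theta),\dots,\gamma^{(d)}(\theta)$ is a basis for all $\theta$, i.e.\ the curve has everywhere non-vanishing torsion; on the frequency side this makes the surface associated to the family $\{\Pi_\theta^{(k)}\}$ genuinely curved, which is precisely the input needed for a Bourgain--Demeter type $\ell^2$-decoupling for the moment curve and its associated cone. The argument of \cite{wang2022restricted} therefore discretizes $\mu$ and the parameter interval at a scale $\delta$, organizes the projected mass into $\delta$-slabs transverse to the moving $k$-planes, and runs a high--low frequency decomposition together with decoupling and an induction on scales. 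This collective curvature argument is what upgrades the trivial $t<1$ bound to $t<\min\{s,k\}=s$, and it is the harmonic-analytic heart of the result.

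Finally, the values of $k$ are handled together: the case $k=d$ is trivial since $\Pi_\theta^{(d)}=\id$, while for $1\le k\le d-1$ the osculating spaces of complementary orders are orthogonal complements of one another, so the estimate for $\Pi_\theta^{(k)}$ is dual to that for the complementary $(d-k)$-plane projection. Combining the energy bound over $t\uparrow s$ with $s\uparrow\min\{\dim E,k\}$ then gives $\dim\Pi_\theta^{(k)}(E)=\min\{\dim E,k\}$ for a.e.\ $\theta$, matching the upper bound.
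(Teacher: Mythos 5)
This statement is imported verbatim from \cite{wang2022restricted}; the paper offers no proof of it, and in fact never uses it directly --- what the argument actually consumes is the $\delta$-discretized counting estimate \cite[Eq.~(2.7)]{wang2022restricted}, recorded as Theorem \ref{Coro eq. 2.7}, from which the measure-theoretic version (Theorem \ref{Theorem Gan measures}) is then derived. So there is no in-paper proof to compare yours against. Your outline is a fair description of the general shape of such arguments: the reduction to Frostman measures via Frostman's lemma, the observation that the upper bound is trivial, and --- most usefully --- the correct diagnosis that the classical Marstrand--Mattila energy method fails because a single pair $(x,y)$ only sees a one-parameter family of projections, so pointwise control of $\int_{-\e}^{\e}|\Pi_\theta^{(k)}(z)|^{-t}\,d\theta$ cannot reach the exponent $k$ (in fact, for a non-degenerate curve the quantity $\langle z,\gamma'(\theta)\rangle$ may vanish to order $d-1$, so the single-vector integral is even worse than your $t<1$). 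That the gain must come collectively from the non-degeneracy of $\theta\mapsto\Pi_\theta^{(k)}$ is also right.

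That said, the proposal is a strategy sketch rather than a proof: the entire harmonic-analytic core (the discretization, the decoupling or incidence input, and the induction on scales that converts non-degeneracy into the exponent gain) is deferred to the cited paper, so nothing is actually established beyond the trivial upper bound. Two specific points also need repair. First, the claim that ``the estimate for $\Pi_\theta^{(k)}$ is dual to that for the complementary $(d-k)$-plane projection'' is unjustified: the orthogonal complement of the $k$-th osculating plane of $\gamma$ is not in general the $(d-k)$-th osculating plane of another non-degenerate curve, and there is no simple dimension duality between projections to complementary subspaces; indeed this paper has to work to realize $e_1^\perp$ as an osculating hyperplane via a Frenet--Serret frame reversal in the proof of Corollary \ref{cor:proj-dim-codim-1}, and only for the pair $k=1$, $k=d-1$. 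Second, even granting the energy-integral formulation, it would not plug into the rest of the paper as written: the proof of Theorem \ref{Theorem Gan measures} needs the quantitative, uniform-over-$\delta$-nets bound of Theorem \ref{Coro eq. 2.7}, not merely the qualitative a.e.-$\theta$ statement for sets.
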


Notice that Theorem \ref{Theorem Gan} is proved for sets. We prove a measure theoretic version, via modest modifications to the original proof of Gan-Guo-Wang:
\begin{theorem} \label{Theorem Gan measures}  Let $d\geq 2, 1\leq k \leq d$, and let $\gamma:(-\e,\e)\rightarrow\R^d$ be a non-degenerate curve. Let $\mu\in  \mathcal{P}\left([0,1]^d\right)$. Then
    \[
        \dim \left( \Pi_\theta ^{(k)}  \mu  \right) = \min \lbrace \dim(\mu),\,k\rbrace,\quad \text{ for a.e. }\theta\in (-\e,\e).
    \]
\end{theorem}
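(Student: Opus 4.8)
The plan is to derive the measure-theoretic statement (Theorem \ref{Theorem Gan measures}) from the set-theoretic version (Theorem \ref{Theorem Gan}) via a standard dimension-transference argument, using the characterization of $\dim\mu$ through Hausdorff dimensions of positive-measure sets from \eqref{Eq lower dim}. The upper bound is immediate and direction-independent: since $\Pi_\theta^{(k)}$ is a Lipschitz map into a $k$-dimensional space, we always have $\dim(\Pi_\theta^{(k)}\mu)\le\min\{\dim\mu,k\}$ for every $\theta$, so all the work is in the lower bound. Set $s=\min\{\dim\mu,k\}$. I would show that for a.e. $\theta$, $\dim(\Pi_\theta^{(k)}\mu)\ge s-\delta$ for each fixed small $\delta>0$, and then intersect over a countable sequence $\delta\to 0$ to conclude.

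The main technical issue is that the exceptional null set of directions produced by the set-version depends on the set $E$, and $\mu$ is spread over uncountably many sets of positive measure, so one cannot naively apply Theorem \ref{Theorem Gan} set-by-set and union the exceptional sets. The standard remedy is a Fubini/Frostman argument. Fix $\delta>0$. By \eqref{Eq lower dim}, there is a Borel set $A\subseteq[0,1]^d$ with $\mu(A)>0$ and $\dim A> s-\delta/2$ (if $\dim\mu\le k$ take $A$ with dimension close to $\dim\mu$; if $\dim\mu>k$ take any $A$ of positive measure with $\dim A>k-\delta/2$, which exists since $\dim\mu\ge\dim A$ can be made large). Consider the set of ``bad'' directions
\[
    B_\delta=\{\theta\in(-\e,\e):\ \dim(\Pi_\theta^{(k)}\mu)< s-\delta\}.
\]
I want to show $B_\delta$ has Lebesgue measure zero.

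The cleanest route is to argue by contradiction through the coarea/Fubini structure of the tangent-space map, following the transference principle of Hochman-Shmerkin. Suppose $|B_\delta|>0$. For each $\theta\in B_\delta$ there is, by the definition of $\dim(\Pi_\theta^{(k)}\mu)$ in \eqref{Eq lower dim}, a Borel set $F_\theta$ in the target $k$-plane with $(\Pi_\theta^{(k)}\mu)(F_\theta)>0$ and $\dim F_\theta< s-\delta$; equivalently, letting $G_\theta=(\Pi_\theta^{(k)})^{-1}(F_\theta)$, we have $\mu(G_\theta)>0$ while $\dim(\Pi_\theta^{(k)}(G_\theta))<s-\delta$. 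The key is to produce a single Borel set $E$ of large dimension whose projection collapses on a positive-measure set of $\theta$, contradicting Theorem \ref{Theorem Gan}. I would do this by a measurable selection/exhaustion: using that $\dim\mu>s-\delta/2$, an $E_n$ from Lemma \ref{lem:exact dim} (applied with $n$ large so that $\mu$ restricted to $E_n$ is $(s-\delta/2)$-Frostman on a set of measure $>1/2$) furnishes a set $E$ with $\dim E\ge s-\delta/2$ and $\mu(E)>1/2$. For this fixed $E$, Theorem \ref{Theorem Gan} gives $\dim(\Pi_\theta^{(k)}E)=\min\{\dim E,k\}\ge s-\delta/2$ for a.e. $\theta$. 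Since $\mu(E)>1/2$ and $\mu(G_\theta)>0$, for $\theta\in B_\delta$ the sets $E$ and $G_\theta$ meet in positive measure on a subset that projects into $F_\theta$; comparing the Frostman lower bound on $\Pi_\theta^{(k)}(E\cap E_n)$ with the dimension drop on $F_\theta$ yields the contradiction for a.e.\ $\theta\in B_\delta$, forcing $|B_\delta|=0$.

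The step I expect to be the genuine obstacle is making the selection of $F_\theta$ (and hence $G_\theta$) jointly measurable in $\theta$ so that Fubini applies, and ensuring the Frostman set $E$ from Lemma \ref{lem:exact dim} interacts correctly with the $\theta$-dependent collapse. An alternative, and probably cleaner, route that sidesteps selection entirely is to reread the proof in \cite{wang2022restricted}: their argument is really a high-low / polynomial-partitioning estimate on $L^p$ or Sobolev norms of projected measures, and the set statement is obtained by applying it to a Frostman measure supported on $E$. Thus the "modest modifications" the authors allude to amount to running their quantitative estimate directly on a measure $\mu_E$ of the form $\mu|_E$ normalized, whose $(s-\delta)$-energy is finite by Lemma \ref{lem:exact dim}, and concluding $\dim(\Pi_\theta^{(k)}\mu_E)\ge s-\delta$ for a.e.\ $\theta$; then $\dim(\Pi_\theta^{(k)}\mu)\ge\dim(\Pi_\theta^{(k)}\mu_E)$ since $\mu\ge c\,\mu_E$ on $E$, and letting $n\to\infty$ (so $\delta\to 0$) over a countable sequence finishes the proof. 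I would present the energy-based version as the main argument, since it avoids the measurable-selection difficulty and matches the structure of the Gan-Guo-Wang proof most transparently.
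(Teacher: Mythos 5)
Your first (main) route has a genuine gap at the step where you claim the contradiction. Two problems. First, $\mu(E)>1/2$ together with $\mu(G_\theta)>0$ does \emph{not} imply $\mu(E\cap G_\theta)>0$: nothing prevents $G_\theta$ from sitting inside the complement of $E$ when $\mu(G_\theta)\le 1/2$. Second, and more fundamentally, even granting a positive-measure intersection, Theorem \ref{Theorem Gan} applied to the single set $E$ only tells you that the \emph{set} $\Pi_\theta^{(k)}(E)$ has dimension at least $s-\delta/2$; this is perfectly consistent with the \emph{measure} $\Pi_\theta^{(k)}\mu$ assigning positive mass to a set $F_\theta$ of dimension $<s-\delta$, since a set of full dimension can contain positive-$\Pi_\theta^{(k)}\mu$-measure subsets of arbitrarily small dimension. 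There is also no ``Frostman lower bound on $\Pi_\theta^{(k)}(E\cap E_n)$'' to compare against: a Frostman bound on $\mu$ upstairs does not descend to a Frostman bound on the projected measure (if it did, the theorem would be trivial). This is exactly why the set-theoretic statement does not formally imply the measure-theoretic one, and why the paper instead goes back inside the Gan--Guo--Wang argument and extracts their quantitative estimate (Theorem \ref{Coro eq. 2.7}), which bounds $\delta\sum_{\theta\in\Lambda_\delta}\mu_\theta\bigl(\bigcup_{D\in\mathbb{D}_\theta}D\bigr)$ for arbitrary families of at most $\delta^{-\alpha^*}$ discs in each direction; the Frostman case is then settled by contradiction via a covering/Vitali argument tested against this estimate. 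Your Route 1 never produces an input of that kind.

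Your second route is the correct idea and is in fact the paper's strategy (quantitative GGW estimate for Frostman measures, then Lemma \ref{lem:exact dim} to reduce the general case), but as written it is a statement of intent rather than a proof: ``running their quantitative estimate directly on a measure'' is precisely the content that has to be supplied. Moreover, your final reduction step has the inequality reversed: from $\mu\ge c\,\mu_E$ one gets $\Pi_\theta^{(k)}\mu_E\ll\Pi_\theta^{(k)}\mu$, hence $\dim\Pi_\theta^{(k)}\mu\le\dim\Pi_\theta^{(k)}\mu_E$, not $\ge$. The correct passage from the Frostman pieces $\mu|_{E_n}$ back to $\mu$ is the one the paper uses: given $A$ with $\Pi_\theta^{(k)}\mu(A)>0$, choose $n$ so large that $\mu(E_n)$ exceeds $1$ minus that mass, so that $\Pi_\theta^{(k)}(\mu|_{E_n})(A)>0$ and therefore $\dim A\ge\dim\Pi_\theta^{(k)}(\mu|_{E_n})$; this is why one must restrict to the set $\liminf_n X_n$ of directions that are good for all large $n$ simultaneously.
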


\subsubsection{Proof of Theorem \ref{Theorem Gan measures}}

We proceed to prove Theorem \ref{Theorem Gan measures}. We begin with the case of Frostman measures. The general case will then follow from Lemma \ref{lem:exact dim}. So,
let $\gamma$ be a non-degenerate curve. For $\delta>0$ we say that $\Lambda_\delta \subseteq (-\e,\e)$ is a $\delta$-net of $(-\e,\e)$ if
$$\delta\leq |\theta_1 - \theta_2 | <2\delta,\quad \forall \theta_1\neq \theta_2 \in \Lambda_\delta,$$
and for every $\theta \in (-\e,\e)$ there exists $\theta'\in \Lambda_\delta$ s.t. $\delta\leq |\theta - \theta'|< 2\delta$.

In what follows, we write $\mu_\theta=\Pi_\theta^{(k)}\mu$ for simplicity.



Here is the main technical result of \cite{wang2022restricted} that we will use as a black box:

\begin{theorem}[{\cite[Eq. (2.7)]{wang2022restricted}}] \label{Coro eq. 2.7}
    Let $\alpha\in (0,d), \alpha^* \in (0, \, \min\lbrace k,\, \alpha \rbrace)$. Let $\mu \in \mathcal{P}(B^d (0,1))$ be an $\alpha$-Frostman measure. Let $\delta>0$ be small, and let $\Lambda_\delta$ be a $\delta$-net. For every $\theta \in \Lambda_\delta$ let $\mathbb{D}_\theta$ be a disjoint collection of at most $\delta^{-\alpha^*}$ balls of radius $\delta$ in $\Pi_\theta ^{(k)} \left(\R^d \right)$, and let $\mu_\theta:= \Pi_\theta ^{(k)} \mu$.

    Then there exists $\eta_0 = \eta_0 (\alpha,\alpha^*)>0$ such that
    \[
        \delta \sum_{\theta \in \Lambda_\delta} \mu_{\theta} \left( \bigcup_{D\in \mathbb{D}_\theta}  D \right) \lesssim_{\alpha,\alpha^*} \delta^{\eta_0},
    \]
    where the implicit constant depends on $\alpha,\alpha^*$ but not on $\delta$.
\end{theorem}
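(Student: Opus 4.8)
The plan is to read $\delta \sum_{\theta \in \Lambda_\delta}$ as a Riemann-sum discretization of $\int_{-\e}^{\e} \mu_\theta\big(\bigcup_{D} D\big)\,d\theta$ and to recognize the desired bound as a \emph{power-gain} (quantitative) refinement of the Marstrand--Mattila principle: since $\alpha^* < \min\{k,\alpha\}$, a family of $\le \delta^{-\alpha^*}$ balls of radius $\delta$ has ``dimension'' strictly below both $k$ and $\alpha$, so it should capture only a $\delta^{\eta_0}$-fraction of the mass of $\Pi_\theta^{(k)}\mu$ on average in $\theta$. First I would dualize and discretize: using the Frostman hypothesis $\mu(B(x,r)) \le C r^\alpha$, replace $\mu$ at scale $\delta$ by a weighted sum of point masses supported on a maximal $\delta$-separated subset of $\supp(\mu)$, each of weight $\lesssim \delta^\alpha$. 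This turns each $\mu_\theta(\bigcup_D D)$ into a weighted incidence count between these points and the \emph{plates} $T_{\theta,D} := (\Pi_\theta^{(k)})^{-1}(D)$, which are $\delta$-neighborhoods of affine $(d-k)$-planes orthogonal to $\Pi_\theta^{(k)}$.

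The geometric heart is the quantitative transversality furnished by non-degeneracy. Taylor-expanding $\gamma$ and using $\det[\gamma'(\theta),\ldots,\gamma^{(d)}(\theta)] \neq 0$, one quantifies the rotation speed of the osculating flag: for $|\theta-\theta'| \sim \rho$ the subspaces $\Pi_\theta^{(k)}$ and $\Pi_{\theta'}^{(k)}$ are $\sim \rho$-transverse, so two plates $T_{\theta,D}, T_{\theta',D'}$ from $\rho$-separated directions meet in a box whose dimensions are controlled by $\delta$ and $\rho$. The structural point that Gan--Guo--Wang exploit is that the osculating spaces form a \emph{nested} flag $\Pi_\theta^{(1)} \subset \cdots \subset \Pi_\theta^{(d)} = \R^d$, which lets one analyze the $k$-plane estimate inductively through its neighbours $\Pi_\theta^{(k\pm 1)}$ and reduces matters to controlling how points can pile up across well-separated plates.

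With these in hand I would run the standard two-layer induction of this area: an outer induction on $k$ (or on the pair $(d,k)$) using the flag nesting, and for each $k$ an induction on scales that produces the power gain. The scale induction proceeds by a broad--narrow (high--low) decomposition: the narrow part, where points concentrate near a single lower-dimensional variety, is rescaled and fed back into the induction hypothesis, while the broad part, where the relevant plates genuinely come from $\gtrsim\rho$-separated directions, is estimated by the transversality/Kakeya bound of the previous paragraph combined with a refined decoupling (or refined Strichartz) inequality for the non-degenerate curve $\gamma$ and, where needed, polynomial partitioning. The Frostman bound enters throughout to cap the multiplicity with which the discretized mass can occupy any single plate. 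Accumulating a definite gain over the $\sim\log(1/\delta)$ dyadic scales yields the final exponent $\eta_0 = \eta_0(\alpha,\alpha^*) > 0$.

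The main obstacle --- and the reason this statement is imported here as a black box --- is precisely the multilinear incidence/Kakeya estimate for the nested family of plates in the broad case. This is the deep content developed through the polynomial method of Pramanik--Yang--Zahl and its higher-dimensional elaboration in Gan--Guo--Wang, and reproving it from scratch is far beyond a short argument. My plan is therefore honest about its scope: I would carry out the reductions above --- the Riemann-sum interpretation, the Frostman discretization to a weighted incidence count, and the transversality computation from non-degeneracy --- and then invoke the decoupling-plus-polynomial-partitioning core of \cite{wang2022restricted} rather than reprove it. Everything outside that core, including the iteration that converts a single-scale gain into the power $\delta^{\eta_0}$, is routine once the core estimate is available.
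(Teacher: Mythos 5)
The paper offers no proof of this statement to compare against: it is quoted verbatim from \cite{wang2022restricted} (their Eq.~(2.7)) and explicitly used as a black box, and your proposal---whose final step is to invoke the decoupling/polynomial-partitioning core of that very same reference---amounts to the same citation. Your surrounding sketch (Frostman discretization to plate incidences, transversality of the osculating planes from non-degeneracy, broad--narrow induction on scales) is a plausible gloss on what sits inside that black box, but it is not load-bearing, so your approach and the paper's effectively coincide.
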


\begin{proof}[Proof of Theorem \ref{Theorem Gan measures} for Frostman measures]

    Let $\mu$ be an $\alpha$-Frostman measure; without loss of generality, $\alpha\le k$. Let  $\eta0$ be small, and define $\Theta \subseteq [0,1]$ via
    \[
        \Theta:=\left\lbrace \theta\in (-\e,\e):\, \dim \left( \Pi_\theta ^{(k)} \mu \right) < \alpha-2\eta\right\rbrace.
    \]
    By $\sigma$-stability of dimension, it is enough to show that $\mathcal{H}^1(\Theta)=0$. Suppose, for the sake of contradiction, that $\mathcal{H}^1 (\Theta)>0$.

    For every $\theta \in \Theta$, there exists a set $K_\theta \subseteq \Pi_\theta ^{(k)} $ such that $\dim K_\theta < s$, and $c_\theta:=\Pi_\theta ^{(k)} \nu (K_\theta) >0$. Since $\Theta = \cup_{n\in\mathbb{N}} \{ c_\theta>1/n\}$, it follows from the sub-additivity of Hausdorff measure for general sets that there exists a subset $\Theta'\subseteq \Theta$ such that $\mathcal{H}^1(\Theta')>0$, and there is a constant $c>0$ such that $c_\theta \geq c$ for all $\theta\in \Theta'$. Without loss of generality, we assume $\Theta'=\Theta$ and $c=1$.

    Let $\delta>0$ be small. For every $\theta \in \Theta$, let
    \[
        \left\lbrace B\left( \Pi_\theta ^{(k)} (x_j(\theta)),\, r_j(\theta) \right) \right \rbrace_j,\quad x_j(\theta) \in\R^d
    \]
    be a covering of $K_{\theta}$ by discs of dyadic radii smaller than $\delta$, such that
    \begin{equation} \label{Eq 2.9}
        \sum_j r_j(\theta)^{s+\eta} <1.
    \end{equation}
    For each $\theta \in \Theta$ and $k\geq |\log_2 \delta|$, let
    \[
        \mathbb{D}_{\theta, k}:= \left\lbrace B\left( \Pi_\theta ^{(k)} (x_j(\theta)),\, r_j(\theta) \right):\, r_j(\theta)=2^{-k} \right \rbrace,
    \]
    and
    \[
        \mathbf{D}_{\theta,k}:= \bigcup_{D\in \mathbb{D}_{\theta,k}} D.
    \]
    Then for each $\theta \in \Theta$ we have
    \[
        \sum_{k\geq |\log_2 \delta|} \mu_\theta \left( \mathbf{D}_{\theta,k} \right)  \geq  1.
    \]
    By the Vitali covering Lemma, for each $\theta \in \Theta$ and each $k$, we can find a disjoint sub-collection $\mathbb{D}_{\theta, k} ' \subseteq \mathbb{D}_{\theta, k}$ such that
    \[
        1 \leq \sum_{k\geq |\log_2 \delta|} \mu_\theta \left( \mathbf{D}_{\theta,k} '' \right),
    \]
    where
    \[
        \mathbf{D}_{\theta,k} '':= \bigcup_{D'\in \mathbb{D}_{\theta,k} '} \left( 10^d D' \right).
    \]

    Thus,
    \[
        \mathcal{H}^1 (\Theta) \leq \sum_{k\geq |\log_2 \delta|} \int_\Theta \mu_\theta \left( \mathbf{D}_{\theta,k} '' \right)\, d\theta.
    \]

    By \eqref{Eq 2.9} for each $\theta$ and $k$ the set $\mathbb{D}_{\theta,k} '$ is a union of at most $2^{k(s+\eta')}$ disjoint discs of radius $2^{-k}$. Applying Theorem \ref{Coro eq. 2.7} and the triangle inequality, we can find $\eta_0>0$ independent of $k$ and $\delta$ such that
    \[
        \mathcal{H}^1 (\Theta)\lesssim \sum_{k\geq |\log_2 \delta|} 2^{-k \eta_0}.
    \]
    Letting $\delta \rightarrow 0$, we see that $\mathcal{H}^1(\Theta)=0$, which is the desired contradiction.
\end{proof}

\begin{proof}[Proof of Theorem \ref{Theorem Gan measures}]
    Consider now the case of general $\mu$. We may clearly assume $\dim \mu = \alpha>0$ - otherwise the result is trivial. By Lemma \ref{lem:exact dim},  for all $n\in \mathbb{N}$ there exists a set $E_n \subseteq\R^d$ such that:
    \begin{enumerate}
        \item $\mu (E_n)\geq 1- \frac{1}{n}$.

        \item There exists $C=C(n,\mu)\in\R_+$ such that
              $$ \mu(B(x,r)\cap E_n)\leq C\cdot r^{\alpha-\frac{1}{n}},\, \text{ for all } x\in E_n \text{ and } r>0.$$
    \end{enumerate}
    Let $1\leq k \leq d$.  Since for all $n$ the measure $\mu|_{E_n}$ is an $\alpha-\frac{1}{n}$-Frostman measure, applying Theorem \ref{Theorem Gan measures} for Frostman measures to it we get that
    $$X_n := \lbrace \theta\in (-\e,\e):\, \dim \left( \Pi_\theta ^{(k)} \left( \mu|_{E_n} \right) \right) = \min \lbrace \dim(\mu|_{E_n}),\,k\rbrace \rbrace$$
    has full Lebesgue measure in $(-\e,\e)$ for all $n$. So,
    $$X:=\liminf X_n \subseteq (-\e,\e)$$
    has full Lebesgue measure in $(-\e,\e)$.

    Finally, let $\theta \in X$. Let $A\subseteq\R^k$ be such that
    \[
        \Pi_\theta ^{(k)} \left( \mu \right) \left( A \right) >0.
    \]
    Then for every $n$ such that $\mu(E_n)>1-\mu(A)$ we have
    \[
        \Pi_\theta ^{(k)} \left( \mu|_{E_n} \right) \left( A \right) >0.
    \]
    Since $\theta \in X$ then for all $n$ large enough $\theta \in X_n$. Let $n_0$ be such that it satisfies both of these conditions. Then the previously displayed equation holds and we also we  have $\theta \in X_{n_0}$ and so
    \[
        \dim \Pi_\theta ^{(k)} \left( \mu|_{E_{n_0}} \right)  = \min \lbrace \dim(\mu|_{E_{n_0}}),\,k\rbrace  = \min \lbrace \dim(\mu),\,k\rbrace.
    \]
    Thus,
    \[
        \dim A \geq \dim \Pi_\theta ^{(k)} \left( \mu|_{E_{n_0}} \right) =\min \lbrace \dim(\mu),\,k\rbrace.
    \]
    It follows that
    \[
        \dim \Pi_\theta ^{(k)} \left( \mu \right)  = \inf \lbrace \dim A:\, \Pi_\theta ^{(k)} \left( \mu \right) (A)>0 \rbrace \geq \min \lbrace \dim(\mu),\,k\rbrace,
    \]
    and so
    \[
        \dim \Pi_\theta ^{(k)} \left( \mu \right) = \min \lbrace \dim(\mu),\,k\rbrace.
    \]
    Since this holds for every $\theta \in X$ which carries full Lebesgue measure, the proof of Theorem \ref{Theorem Gan measures} is now complete.
\end{proof}

\section{Projections of self-similar measures and \texorpdfstring{$G$}{G}-orbits} \label{Section EFD}

\subsection{Main result: almost constancy along  \texorpdfstring{$G$}{G}-orbits}
As we discussed in Section \ref{Section intro},  the structure of self-similar measures allows for the application of ergodic theoretic tools in geometric problems. In this section, we use such tools to prove  ``almost constancy'' of the dimension of projections of self-similar measures along $G$-orbits. For any closed subgroup $G\leq \mathbb{O}(\mathbb{R}^d)$ let $\lambda_G$ denote its (normalized) Haar measure. Here is the main result of this Section:
\begin{theorem} \label{thm:constant-dim}
    Let $\nu$ be a self-similar measure on $\mathbb{R}^d$ generated by a weighted IFS $\mathbf{\Lambda}$. Let $G=G_{\mathbf{\Lambda}}$ denote the closed group generated by the orthogonal parts of the elements of $\mathbf{\Lambda}$.

    Then for any $\pi\in G_{d,k}$ there exists some $\alpha=\alpha(G\cdot\pi)\in [0,k]$ such that:
    \begin{align*}
        \dim\pi g\nu & = \alpha \quad\text{for $\lambda_G$-almost all $g\in G$}, \\
        \dim\pi g\nu & \ge \alpha \quad\text{for all $g\in G$}.
    \end{align*}
\end{theorem}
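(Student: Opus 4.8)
=== PROOF PROPOSAL FOR THEOREM \ref{thm:constant-dim} ===

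\textbf{Overall strategy.} The plan is to establish two facts: first, that the function $g \mapsto \dim \pi g \nu$ is \emph{almost surely constant} with respect to the Haar measure $\lambda_G$; and second, that this almost-sure value is a \emph{lower bound} for $\dim \pi g \nu$ at \emph{every} point $g \in G$. The first fact I expect to derive from an invariance (ergodicity-type) argument exploiting the self-similar structure, while the second should follow from lower semicontinuity together with the density of a suitable orbit. Throughout I will write $\alpha = \alpha(G\cdot\pi)$ for the almost-sure value once it is shown to exist, and note that $\alpha \in [0,k]$ is automatic since $\pi g \nu$ is supported on a $k$-dimensional subspace.

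\textbf{Step 1: almost-sure constancy via local entropy averages and symbolic invariance.} Following the approach of Falconer--Jin \cite{Falconer2014Jin}, I would express $\dim \pi g \nu$ through local entropy averages of the projected measure along the dyadic (or $r_i$-adic) filtration determined by the IFS. The key structural input is the self-similarity relation $\nu = \sum_i p_i\, f_i \nu$, which induces a relation on $\pi g \nu$: projecting $f_i \nu = r_i h_i \nu + t_i$ gives $\pi g f_i \nu = r_i (\pi g h_i)\nu + \pi g t_i$, so that the projected measure conditioned on the $i$-th cylinder is, up to a similarity of $\mathbb{R}^k$, a copy of $\pi (g h_i)\nu$. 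Because $g h_i$ ranges over translates of $g$ by the generators whose closed group is $G$, the entropy-average computation of $\dim \pi g\nu$ is governed by a function on $G$ that is invariant under right multiplication by each $h_i$ (in the appropriate averaged sense). The hard part will be making this invariance precise at the level of the limiting entropy average rather than merely the measure: I would argue that the quantity $\phi(g) := \dim \pi g \nu$ satisfies $\phi(g) = \sum_i p_i\, \phi(g h_i)$ $\lambda_G$-almost everywhere, because refining by one symbol does not change the asymptotic entropy growth rate, only shifts the base point by $h_i$. This identity, combined with the fact that $\lambda_G$ is the unique stationary measure for the random walk driven by $(h_i, p_i)$ on the closed group $G$ they generate, forces $\phi$ to be constant $\lambda_G$-a.e.\ on each coset structure; since $G$ is precisely the closure of the generated group, $\phi \equiv \alpha$ almost everywhere.

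\textbf{Step 2: the everywhere lower bound.} For the pointwise inequality $\dim \pi g \nu \ge \alpha$ for \emph{all} $g$, I would combine two ingredients. The measure-dimension function is Borel and, as recorded in Section \ref{Section dimension}, behaves well under weak$^*$ limits; more usefully, the map $g \mapsto \dim \pi g \nu$ is \emph{lower semicontinuous} — this is exactly the kind of semicontinuity established by Hochman--Shmerkin \cite{hochman2009local} for the dimension of projections as the projection varies. Granting lower semicontinuity, it suffices to show that for every $g \in G$ there is a sequence $g_m \to g$ with $\dim \pi g_m \nu = \alpha$. This I would obtain from the self-similar relation again: the identity $\pi g f_\omega \nu = (\text{similarity}) \circ \pi (g h_\omega)\nu$ over finite words $\omega$ shows $\dim \pi (g h_\omega)\nu = \dim \pi g \nu$ for \emph{every} word, so the dimension is literally constant along the entire forward orbit $\{g h_\omega\}$. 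Since the group generated by the $h_i$ is dense in $G$, the orbit $\{g h_\omega : \omega\}$ is dense in $gG = G$; picking the approximating sequence inside the full-measure set where $\dim = \alpha$ and invoking lower semicontinuity yields $\dim \pi g \nu \ge \alpha$ at the arbitrary point $g$.

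\textbf{Main obstacle and remarks.} The principal technical difficulty lies in Step 1: rigorously passing from the measure-level self-similarity to the claimed functional equation $\phi(g) = \sum_i p_i \phi(gh_i)$ almost everywhere, and then deducing a.e.\ constancy from stationarity of Haar measure under the generating random walk. Care is needed because $\dim \pi g\nu$ is defined as a $\liminf$ of pointwise dimensions, and one must ensure that the conditioning on cylinders interacts correctly with the projection $\pi g$; this is precisely where the local entropy average machinery of \cite{hochman2009local, Falconer2014Jin} does the heavy lifting, converting the geometric dimension into an additive quantity along the symbolic coding that is manifestly compatible with the group action. A secondary subtlety is justifying lower semicontinuity of $g \mapsto \dim \pi g \nu$ in the requisite form; I would either cite the semicontinuity statement of \cite{hochman2009local} directly or re-derive it from the entropy-average representation, noting that the entropy averages themselves vary lower semicontinuously in the projection parameter. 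The value $\alpha$ depends only on the orbit $G\cdot\pi$ — not on $g$ — since replacing $\pi$ by $\pi g_0$ merely reparametrizes the same family $\{\pi g\nu : g \in G\}$, so the notation $\alpha(G\cdot\pi)$ is justified.
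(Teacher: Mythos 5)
There is a genuine gap, and it sits exactly where the real work of the theorem lies. Your Step 2 hinges on the assertion that $g\mapsto \dim\pi g\nu$ is lower semicontinuous, citing Hochman--Shmerkin. But what \cite{hochman2009local} provides is a lower semicontinuous \emph{minorant} $E(\pi)\le \dim\pi\nu$ that agrees with $\dim\pi\nu$ almost everywhere -- and constructing that minorant \emph{is} the content of the theorem you are trying to prove; it is obtained precisely through the local entropy average machinery, not as an off-the-shelf continuity property. The map $\mu\mapsto\dim\mu$ is only Borel in the weak$^*$ topology, and there is no a priori reason for $g\mapsto\dim\pi g\nu$ to be lower semicontinuous. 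The paper circumvents this by replacing the dimension with the continuous modified entropies $\widetilde{H}_{\overline{r}^q}$, forming the constants $E_q=\int \widetilde{H}_{\overline{r}^q}(\pi h\nu)/(q\log(1/\overline{r}))\,d\lambda_{G_q}(h)$, and proving via \emph{uniform} convergence of ergodic averages for the ergodic skew product $(\omega,g)\mapsto(\sigma\omega,h_{\omega_1}g)$ that $\dim\pi g\nu\ge E_q-C/q$ for \emph{all} $g$; the everywhere lower bound then follows by combining this with Fatou and the bound $\dim\mu\le\liminf_r H_r(\mu)/\log(1/r)$, with no semicontinuity of the dimension ever invoked. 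A further complication you do not address is that the IFS iterates $\mathbf{\Lambda}_q$ may generate strictly smaller groups $G_q\le G$; the paper needs a separate argument (stabilization of $G_q$ along a subsequence, plus the absolute continuity $h\nu\ll\nu$ to move between connected components) to pass from $G^{\circ}$ to all of $G$.

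Two smaller but real errors: the functional equation $\phi(g)=\sum_i p_i\phi(gh_i)$ is false, since the Hausdorff dimension of a convex combination $\sum_i p_i\mu_i$ is $\min_i\dim\mu_i$, not an average; and the claimed identity $\dim\pi(gh_\omega)\nu=\dim\pi g\nu$ along the forward orbit is only an inequality $\dim\pi(gh_\omega)\nu\ge\dim\pi g\nu$ (restricting a measure to a cylinder can raise its dimension, as $\dim$ is an essential infimum). The correct consequence of self-similarity, $h\nu\ll\nu$ hence $\dim\pi gh\nu\ge\dim\pi g\nu$, does make the superlevel sets $\{g:\dim\pi g\nu\ge\alpha\}$ essentially invariant under right translation by the dense subgroup, and ergodicity of Haar measure then yields the a.e.\ constancy of your Step 1; that part of your argument is salvageable. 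But the everywhere lower bound -- the substantive half of the statement -- is not established by your proposal.
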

Our approach borrows many ideas from that of Falconer-Jin \cite{Falconer2014Jin}, but we give a self-contained presentation for the reader's benefit, and since there are some significant differences. Notably, we do not assume that $G$ is connected, and we are able to achieve some simplifications.  When $G$ is connected it is not too hard to deduce Theorem \ref{thm:constant-dim} from the approach of \cite{Falconer2014Jin}; nonetheless, even in that case it was not stated explicitly to our knowledge.

\begin{remark}
    It remains an interesting problem whether $\dim \pi g\nu=\alpha$ for \emph{all} $g\in G$. This is the case under the assumptions of Theorem \ref{Main Tech theorem}, but we do not know the answer in general.
\end{remark}

Throughout this section, we fix a weighted IFS
\[
    \mathbf{\Lambda} = \big\lbrace f_i(x)=r_i h_i(x)+t_i; p_i \big\rbrace_{i=1}^n.
\]
Let $\nu$ be the associated self-similar measure. Denote by $G\le \mathbb{O}(\R^d)$ the closed group generated by $h_1,\ldots,h_n$, and let $\lambda_G$ be its normalized Haar measure.
\subsection{Some preliminary lemmas}

Write $\mathcal{A}=\{1,\ldots,n\}$. Let $\sigma$ be the left-shift on $(\mathcal{A}^{\N},p^{\N})$, and let $F: \mathcal{A}^{\N}\times G\to  \mathcal{A}^{\N}\times G$ be the skew product
$$(\omega,g)\to (\sigma\omega, h_{\omega_1}g).$$
Clearly $F$ preserves the measure $p^{\N}\times \lambda_G$ in the sense that
$$F\left( p^{\N}\times \lambda_G \right) = p^{\N}\times \lambda_G.$$
In fact, more can be said about this measure preserving system.  For a standard reference on ergodic theory see \cite{einsiedler2011ergodic}.
\begin{lemma} \label{lem:ergodicity}
    The measure preserving system
    $$(\mathcal{A}^{\N}\times G, F, p^{\N}\times \lambda_G)$$
    is ergodic.
\end{lemma}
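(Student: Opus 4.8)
The plan is to prove ergodicity in the standard way for a compact group extension: show that any $F$-invariant function $\phi\in L^2(\mathcal{A}^{\N}\times G,\, p^{\N}\times\lambda_G)$ is constant. Since $G$ is a compact group, I would expand $\phi$ in the $G$-variable via the Peter--Weyl theorem. Writing $\{\rho\}$ for the irreducible unitary representations of $G$ and $\rho_{ij}$ for the matrix coefficients of $\rho$ in an orthonormal basis (with $d_\rho=\dim\rho$), every $\phi\in L^2$ has a unique expansion
$$\phi(\omega,g)=\sum_{\rho}\sum_{i,j=1}^{d_\rho} a^\rho_{ij}(\omega)\,\rho_{ij}(g),\qquad a^\rho_{ij}\in L^2(\mathcal{A}^{\N}).$$
The goal then becomes: the coefficients of every nontrivial $\rho$ vanish, while the coefficient of the trivial representation is constant.

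Next I would use invariance. Substituting $\phi\circ F=\phi$ and applying the homomorphism identity $\rho_{ij}(h_{\omega_1}g)=\sum_k \rho_{ik}(h_{\omega_1})\rho_{kj}(g)$, then matching Peter--Weyl coefficients (which are unique), produces, for each fixed $\rho$ and each column index $j$, a cocycle recursion for the vector $b=b^\rho_j:=(a^\rho_{1j},\dots,a^\rho_{d_\rho j})^{T}$:
$$b(\omega)=\rho(h_{\omega_1})^{T}\,b(\sigma\omega)\qquad\text{for a.e. }\omega.$$
For the trivial representation this reads $a(\omega)=a(\sigma\omega)$, so $a$ is $\sigma$-invariant and hence constant by ergodicity of the Bernoulli shift. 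It therefore remains to show $b\equiv 0$ whenever $\rho$ is nontrivial.

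The argument then splits into two steps. First I claim $\E{b}=0$. Taking expectations in the recursion and using independence of $\omega_1$ from $\sigma\omega$ under $p^{\N}$ gives $v=M^{T}v$, where $v=\E{b}$ and $M=\sum_i p_i\,\rho(h_i)$. An eigenvector of $M$ with eigenvalue $1$ would, by the equality case of the triangle inequality (the $p_i$ are strictly positive and the $\rho(h_i)$ are unitary, so all $\rho(h_i)w$ must coincide), be fixed by every $\rho(h_i)$, hence by the closure $G=\overline{\langle h_1,\dots,h_n\rangle}$ using continuity; this contradicts irreducibility and nontriviality of $\rho$. Thus $1$ is not an eigenvalue of $M$, equivalently of $M^{T}$, forcing $v=0$. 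Second, I would upgrade $\E{b}=0$ to $b\equiv 0$ by a martingale argument: iterating the recursion yields $b(\omega)=\rho(h_{\omega_n}\cdots h_{\omega_1})^{T}b(\sigma^n\omega)$, where the leading matrix is measurable with respect to $\mathcal{F}_n=\sigma(\omega_1,\dots,\omega_n)$ while $b(\sigma^n\omega)$ depends only on the independent tail, so $\E{b\mid\mathcal{F}_n}=\rho(h_{\omega_n}\cdots h_{\omega_1})^{T}\E{b}=0$. Since $\mathcal{F}_n\uparrow\mathcal{F}_\infty$ and $b$ is $\mathcal{F}_\infty$-measurable, Lévy's upward convergence theorem gives $\E{b\mid\mathcal{F}_n}\to b$ in $L^2$, whence $b=0$.

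I expect the main obstacle to be precisely the passage from $\E{b}=0$ to $b\equiv 0$: the spectral/irreducibility input only controls the \emph{average} of the coefficient vector (one might be tempted to stop at the observation that $\|b(\omega)\|$ is $\sigma$-invariant and hence constant, but that alone does not give vanishing). It is the independence structure of the Bernoulli base, exploited through martingale convergence, that bridges this gap. The remaining verifications — the equality case of the triangle inequality, the density/continuity step passing from the generators $h_i$ to all of $G$, and the bookkeeping of the Peter--Weyl coefficients — are routine and I would carry them out without dwelling on them.
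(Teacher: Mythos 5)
Your proof is correct, but it takes a genuinely different route from the paper. The paper disposes of this lemma in two lines by citing a result of Parry (Corollary 4.5 of the cited 1997 paper) that reduces ergodicity of such skew products over the full shift to topological transitivity, which is then immediate because the semigroup generated by the $h_i$ is dense in $G$ and the base is the full shift. You instead give a self-contained harmonic-analytic proof: Peter--Weyl decomposition in the fiber, the cocycle recursion $b(\omega)=\rho(h_{\omega_1})^{T}b(\sigma\omega)$ for the coefficient vectors, the spectral step showing $1$ is not an eigenvalue of $M=\sum_i p_i\rho(h_i)$ for nontrivial irreducible $\rho$ (via the equality case of the triangle inequality, strict positivity of the $p_i$, and density of $\langle h_1,\dots,h_n\rangle$ in $G$), and finally the martingale upgrade from $\E{b}=0$ to $b\equiv 0$ using independence in the Bernoulli base and L\'evy's upward theorem. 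I checked the details and they all go through; in particular you are right to flag that the unitarity of $\rho(h_{\omega_1})^{T}$ only makes $\|b\|$ shift-invariant (hence constant, but not obviously zero), and that the martingale argument is the step that actually forces vanishing --- this is the real content of the proof and is exactly the kind of argument underlying Parry-type theorems. What the paper's approach buys is brevity at the cost of a black-box citation; what yours buys is a transparent, self-contained argument that makes visible precisely where the hypotheses (positive weights, $G$ the \emph{closed} group generated by the $h_i$, Bernoulli independence) are used, and it works equally well without assuming $G$ connected, as required here.
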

\begin{proof}
    By \cite[Corollary 4.5]{parry1997}, it is enough to check that $F$ has a dense orbit. But this is clear since $G$ is by definition the closure of the group generated by the $h_i$, and the base map is the full shift. See e.g. \cite[Proposition 2.3]{Falconer2014Jin} for more details.
\end{proof}

Let $\mathcal{A}^*$ denote the family of finite words in the alphabet $\mathcal{A}$. For a finite word $(u_1,\ldots,u_j)\in \mathcal{A}^*$, let
\[
    p_u = p_{u_1}\cdots  p_{u_j},\quad  f_u = f_{u_1}\cdots f_{u_j},\quad  h_u = h_{u_1}\cdots h_{u_j},\quad r_u = r_{u_1} \cdots r_{u_j}
\]
Let $\overline{r}=\max\{r_1,\ldots,r_n\}\in (0,1)$. Given $q\in\mathbb{N}$, let
\[
    \mathcal{A}_q = \big\lbrace (u_1,\ldots,u_k)\in\mathcal{A^*}: r_{u_1\cdots u_{k-1}}>\overline{r}^q,\, r_{u_1\cdots u_k} \le \overline{r}^q  \big\rbrace \subset \mathcal{A}^*.
\]
This is a cut-set of the tree $\mathcal{A}^{\N}$, in the sense that every infinite word in $\mathcal{A}^{\N}$ starts with exactly one word in $\mathcal{A}_q$. It follows that $\nu$ is also the self-similar measure associated to the WIFS
\[
    \mathbf{\Lambda}_q = \{ f_u, p_u\}_{u\in\mathcal{A}_q}.
\]
We denote by $\eta_q$ the Bernoulli measure on $\mathcal{A}_q^{\N}$ with weights $\{ p_u:u\in\mathcal{A}_q^{\N}\}$.

Let $G_q$ be the closed group generated by the orthogonal parts of the elements of $\mathbf{\Lambda}_q$. It is clear that $G_q\le G$ for all $q$, but $G_q$ may vary with $q$. However, we have the following:
\begin{lemma} \label{lem:groups-Gq}
    Each $G_q$ is a union of some connected components of $G$. As $q$ varies in $\N$ there are only finitely many options for the group $G_q$. In particular, there is an infinite set  $J\subset\N$ such that $G_q:=G'$ for all $q\in J$.
\end{lemma}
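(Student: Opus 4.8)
The plan is to reduce all three assertions to the single claim that each $G_q$ contains the identity component $G^{\circ}$ of $G$ (equivalently, that $G_q$ is open in $G$). Once this is established the other two assertions are immediate: since $G$ is a closed, hence compact, subgroup of $\mathbb{O}(\mathbb{R}^d)$, it has only finitely many connected components, and a subgroup containing $G^{\circ}$ is exactly a union of some of these components (the cosets of $G^{\circ}$); there are at most $2^{[G:G^{\circ}]}$ such subgroups, giving finitely many options for $G_q$, after which the pigeonhole principle yields an infinite $J\subseteq\mathbb{N}$ along which $G_q$ is constant. So I would record these reductions first and spend the rest of the argument on the inclusion $G^{\circ}\subseteq G_q$.

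For the main step I would show that $G_q$ has finite index in $G$, via a greedy decomposition of words along the cut-set. Given any finite word $w\in\mathcal{A}^*$, I read it from the left and repeatedly split off the unique prefix lying in $\mathcal{A}_q$ — the first position at which the running product of contraction ratios drops to $\le\overline{r}^{\,q}$ — until no further cut-set prefix remains. This writes $w=u_1\cdots u_l\,w''$ with $u_1,\ldots,u_l\in\mathcal{A}_q$ and a remainder $w''$ that has not yet crossed the threshold, i.e. $r_{w''}>\overline{r}^{\,q}$. Since every letter contributes a factor at most $\overline{r}$, we have $\overline{r}^{\,|w''|}\ge r_{w''}>\overline{r}^{\,q}$, and as $0<\overline{r}<1$ this forces $|w''|<q$. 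Hence $h_{w''}$ ranges over the fixed finite set $R_q:=\{h_v : v\in\mathcal{A}^*,\ r_v>\overline{r}^{\,q}\}$, of cardinality at most $1+n+\cdots+n^{q-1}$. This length bound is the crux of the whole argument.

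Now $h_w=(h_{u_1}\cdots h_{u_l})\,h_{w''}$, where $h_{u_1}\cdots h_{u_l}$ lies in the semigroup generated by $\{h_u:u\in\mathcal{A}_q\}$ (or equals the identity when $l=0$), and this semigroup is contained in $G_q$. Thus $h_w\in G_q\cdot R_q$ for every $w$. Since the elements $h_w$ are dense in $G$ and $G_q\cdot R_q=\bigcup_{r\in R_q}G_q r$ is closed (a finite union of closed cosets), we get $G=G_q\cdot R_q$, so $G$ is covered by at most $|R_q|$ cosets of $G_q$ and $[G:G_q]\le|R_q|<\infty$. A closed finite-index subgroup of a topological group is open, hence contains the identity component, giving $G^{\circ}\subseteq G_q$ and completing the reduction. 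Along the way I would cite the standard fact that a closed sub-semigroup of a compact group is a subgroup, which justifies both that $G_q$ is a group and that it is closed under inverses.

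The main obstacle, conceptually, is resisting the natural but doomed attempt to generate $G^{\circ}$ from the ``single-letter'' cut-set words $h_i^{\,m}$: the tori $\overline{\langle h_i\rangle}^{\circ}$ can all be trivial (for instance when every $h_i$ has finite order yet the $h_i$ still generate a positive-dimensional $G$, as with suitable finite-order rotations generating a dense subgroup of $SO(3)$), so those elements need not generate $G^{\circ}$. The finite-index argument above sidesteps this entirely, relying only on the coarse bound $|w''|<q$ and compactness, and making no use of the internal structure of $G^{\circ}$.
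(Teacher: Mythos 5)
Your proof is correct and takes essentially the same route as the paper: both rest on the greedy factorization of an arbitrary word into cut-set words times a remainder of length $<q$, which exhibits $G$ as a finite union of translates of (a dense subsemigroup of) $G_q$, and then use finiteness of $G/G^{\circ}$. The only cosmetic difference is at the end, where you note that a closed finite-index subgroup is open, while the paper invokes the Baire category theorem to give $G_q$ nonempty interior; both yield $G^{\circ}\subseteq G_q$.
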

\begin{proof}
    Since $G$ is a Lie group, the identity component $G^{\circ}$ is a normal clopen Lie subgroup of $G$, and the quotient $G/G^{\circ}$ is finite since $G$ is compact (its elements are the connected components). See e.g. \cite[Proposition 9.17]{baker2002}.

    Fix $q$ and let $H_q$ be the (non-closed) semigroup generated by $\{ h_u: u\in \mathcal{A}_q\}$. Then
    \[
        H_1=\bigcup\{ h_u\cdot H_q  : r_u> \overline{r}^q\}.
    \]
    Thus, a finite union of translates of $H_q$ is dense in $G$. By the Baire category theorem, there is $h\in H_1$ such that $h\cdot G_q$ has nonempty interior in $G$, whence $G_q$ itself has nonempty interior. It follows that $(G_q)^{\circ}=G^{\circ}$ (see e.g. \cite[Proposition 9.21]{baker2002}), and $G_q/G^{\circ}$ is a subset of the finite set $G/G^{\circ}$. The claim follows.
\end{proof}

For each $\pi\in \Gamma_{d,k}$, let $\{ \mathcal{D}_r^{\pi}\}$ be a partition of $\pi$ into half-open cubes of side length $r$. We can choose it to vary smoothly in $\pi$ and $r$. Given $\mu\in\mathcal{P}(\pi)$, let $H_r(\mu)$ denote its Shannon entropy  with respect to $\mathcal{D}_r^{\pi}$:
\[
    H_r(\mu) = \sum_{Q\in\mathcal{D}_r^{\pi}} \mu(Q)\log (1/\mu(Q)),
\]
with the usual interpretation $0\log 0=0$. Although $H_r$ is not continuous in the weak$^*$ topology, it has continuous modifications:
\begin{lemma} \label{lem:cont}
    For each $r>0$, there exists a function $\widetilde{H}_r:\mathcal{P}(\pi) \rightarrow \mathbb{R}$ such that:
    \begin{enumerate}
        \item It is continuous in the weak$^*$ topology; and
        \item  $|H_r(\mu)-\widetilde{H}_r(\mu)|=O_{k}(1)$ for every  $\mu\in\mathcal{P}(\pi)$.
    \end{enumerate}
\end{lemma}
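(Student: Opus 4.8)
The plan is to replace the sharp indicators $\mathbf{1}_Q$, whose integrals against $\mu$ jump as mass crosses the boundaries $\partial Q$, by a continuous partition of unity at the same scale, and to define $\widetilde{H}_r$ as the corresponding ``soft'' entropy. Concretely, I would fix a continuous partition of unity $\{\phi_Q\}_{Q\in\mathcal{D}_r^\pi}$ with $0\le\phi_Q\le 1$, $\sum_Q\phi_Q\equiv 1$, and $\supp\phi_Q$ contained in the $r$-neighborhood of $Q$; a concrete choice is the tensor product of one-dimensional tent functions, which has overlap multiplicity bounded by a constant $M=M(k)$ (each point lies in the support of at most $M$ of the $\phi_Q$, and each $Q$ meets the supports of at most $M$ of them). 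Writing $\varphi(t)=t\log(1/t)$ with $\varphi(0)=0$, I set
\[
   \widetilde{H}_r(\mu)=\sum_{Q\in\mathcal{D}_r^\pi}\varphi\!\left(\int\phi_Q\,d\mu\right).
\]
Since the projections $\pi g\nu$ to which the lemma is applied are all supported in a fixed bounded set, it suffices to define and analyse $\widetilde{H}_r$ on measures supported in a fixed ball $B_0$, on which only finitely many $\phi_Q$ are not identically zero, so that the sum is finite (this also guarantees that $H_r(\mu)$ itself is finite, so the difference in (2) is well defined).

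For part (1), each map $\mu\mapsto\int\phi_Q\,d\mu$ is weak$^*$ continuous because $\phi_Q$ is bounded and continuous, and $\varphi$ is continuous on $[0,1]$; as the sum defining $\widetilde{H}_r$ has finitely many nonzero terms on $\mathcal{P}(B_0)$, it is a finite sum of weak$^*$ continuous functions, hence continuous.

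The heart of the matter is the additive comparison in part (2). Writing $a_Q=\mu(Q)$ and $b_Q=\int\phi_Q\,d\mu$, both $(a_Q)$ and $(b_Q)$ are probability vectors (the latter because $\sum_Q\phi_Q\equiv 1$), and $H_r(\mu)=\sum_Q\varphi(a_Q)$, $\widetilde{H}_r(\mu)=\sum_Q\varphi(b_Q)$. I would compare them through the coupling $w_{Q,Q'}=\int_{Q'}\phi_Q\,d\mu$, which has marginals $\sum_Q w_{Q,Q'}=a_{Q'}$ and $\sum_{Q'}w_{Q,Q'}=b_Q$, and whose fibres are supported on at most $M$ indices in each variable by the bounded overlap. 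Decomposing the joint Shannon entropy $H(w)$ along each marginal via the chain rule, the two conditional entropies are each bounded by $\log M$ since every conditional distribution is supported on at most $M$ atoms; combined with the elementary inequalities $H(w)\ge H((a_Q))$ and $H(w)\ge H((b_Q))$, this yields $|H_r(\mu)-\widetilde{H}_r(\mu)|\le\log M=O_k(1)$.

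The main obstacle is not any single estimate but the bookkeeping required to make the comparison rigorous for a countable partition: one must ensure all the entropies involved are finite (secured by the bounded-support reduction, under which every sum has finitely many nonzero terms) and that the coupling argument — the chain rule for Shannon entropy together with the bound on a conditional entropy by the logarithm of the number of atoms on which it is supported — applies verbatim. Once finiteness is in place, this is a routine application of standard entropy inequalities, with the dimensional constant $M$ absorbing all of the geometry of the overlaps.
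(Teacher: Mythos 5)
Your proof is correct, but it takes a genuinely different route from the paper's. The paper defines the continuous surrogate as an integral rather than a sum: with $\psi$ a bump function equal to $1$ on the unit ball and vanishing outside the ball of radius $2$, it sets
\[
\widetilde{H}_r(\mu)=\int\log\left(\frac{1}{\int\psi(x+ry)\,d\mu(y)}\right)d\mu(x),
\]
i.e.\ minus the $\mu$-average of the logarithm of the smoothed mass of an $r$-ball around $x$; weak$^*$ continuity then comes from the continuity of the inner integral in $(x,\mu)$, and the additive comparison with $H_r$ is not proved but quoted from \cite[Lemma 2.3]{peres2000existence}. You instead keep the entropy a sum over cubes, softening the indicators by a partition of unity, and --- more substantially --- you prove the $O_k(1)$ comparison from scratch via the coupling $w_{Q,Q'}=\int_{Q'}\phi_Q\,d\mu$ together with the chain rule, using that each fibre of the coupling is supported on at most $M=M(k)$ atoms. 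That is exactly the standard argument that two (possibly soft) partitions, each refining the other up to multiplicity $M$, have entropies within $\log M$ of each other, and it is correct as written; note that the finiteness bookkeeping is really only needed for the continuity claim, since in the entropy comparison the chain-rule bound already forces $H(w)$, $H((a_Q))$, $H((b_Q))$ to be simultaneously finite or infinite, so the inequality $|H_r-\widetilde H_r|\le\log M$ holds (trivially) even without the bounded-support reduction. What your version buys is self-containedness and an explicit constant $\log M$ with $M=O(3^k)$; what the paper's version buys is brevity and a single formula that visibly varies continuously in $(\pi,r)$, which is convenient given that the partitions $\mathcal{D}_r^{\pi}$ are chosen to depend smoothly on $\pi$. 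The one caveat you correctly flag --- that your $\widetilde{H}_r$ is only continuous on measures supported in a fixed compact set, since the infinite sum need not converge uniformly on all of $\mathcal{P}(\pi)$ --- is harmless: only such measures arise in the application in Proposition \ref{prop:application-local-entropy-avg}, and the paper's own definition carries the same implicit restriction.
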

\begin{proof}
    Let $\psi:\pi\to [0,1]$ be a smooth bump function such that $\psi(x)=1$ for $|x|\leq 1$ and $\psi(x)=0$ for $|x|\geq 2$. Define
    \[
        \widetilde{H}_r(\mu) := \int \log\left( \frac{1}{\int \psi(x+ry)d\mu(y)} \right)  d\mu(x),
    \]
    with the understanding that the integral is restricted  to values of $x$ for which the integrand is finite. Then $\widetilde{H}_r$ is weak$^*$ continuous, and it follows from \cite[Lemma 2.3]{peres2000existence} that it satisfies the second property.
\end{proof}

In the next Lemma, we investigate the entropy of projections of small pieces of $\nu$. To be more precise, for a word $u\in \mathcal{A}_q ^*$, let $[u]$ be the cylinder of infinite words starting with $u$. Let
\[
    \eta_{q,u} := \frac{1}{\eta_q[u]} (\eta_q)|_{[u]}
\]
be the normalized restriction of $\eta_q$ to $[u]$ (recall that $\eta_q$ is the Bernoulli measure corresponding to $\mathbf{\Lambda}_q$). Let $\Phi_q$ be the coding map corresponding to $\mathbf{\Lambda}_q$ (see \eqref{eq:coding-map}). We have the following:
\begin{lemma} \label{lem:scaled-entropy}
    For any $g\in G_q$ and any $u\in \mathcal{A}_q^m$,
    \[
        H_{{\overline{r}^{q(m+1)}}}(\pi g\Phi_q \eta_{q,u}) \ge H_{{\overline{r}^q}}(\pi h_{q,u}g \nu) - O_k(1).
    \]
\end{lemma}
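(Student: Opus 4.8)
The plan is to first make the measure on the left-hand side completely explicit. Since $\eta_q$ is the Bernoulli measure driving the WIFS $\mathbf{\Lambda}_q$ and $\Phi_q$ is its coding map, the Bernoulli product structure gives $\Phi_q\eta_{q,u}=f_{q,u}\nu$, where for $u=(v^{(1)},\dots,v^{(m)})\in\mathcal{A}_q^m$ we write $f_{q,u}=f_{v^{(1)}}\circ\cdots\circ f_{v^{(m)}}$, a similarity with contraction ratio $r_{q,u}=\prod_i r_{v^{(i)}}$ and orthogonal part $h_{q,u}=\prod_i h_{v^{(i)}}\in G_q$. Writing $f_{q,u}(x)=r_{q,u}h_{q,u}x+t_{q,u}$ and using that $g$ and $h_{q,u}$ are isometries, the projected measure $\pi g f_{q,u}\nu$ is the image of $\nu$ under an affine map whose linear part is $r_{q,u}\,\pi g h_{q,u}$; hence it is a translate of the $r_{q,u}$-scaling of the projection $\pi g h_{q,u}\nu$. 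I would then absorb the reordering of the orthogonal parts (left versus right action, so that $g h_{q,u}$ becomes $h_{q,u}g$, both lying in $G_q$) into the skew-product bookkeeping of $F$ from Section \ref{Section EFD}, identifying the relevant projection with $\pi h_{q,u}g\nu$.

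With this identification the claim becomes a one-generation entropy comparison. The main tool is the behaviour of $H_r$ under similarities: for a scaling $D_\lambda$ on $\pi$ followed by a translation, $H_r(D_\lambda\mu+c)=H_{r/\lambda}(\mu)+O_k(1)$, the error accounting for the mismatch between the grid $\mathcal{D}_r^{\pi}$ and its scaled, translated copy. To make this rigorous in the weak$^*$ setting and keep the constant uniform, I would work with the continuous modification $\widetilde{H}_r$ of Lemma \ref{lem:cont}, for which the scaling and translation estimates hold with a single constant depending only on $k$. Applying this with $\lambda=r_{q,u}$ and $r=\overline{r}^{q(m+1)}$ converts the left-hand side into $H_{\overline{r}^{q(m+1)}/r_{q,u}}(\pi h_{q,u}g\nu)+O_k(1)$. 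Since each letter $v\in\mathcal{A}_q$ satisfies $\overline{r}^q\,\underline{r}<r_v\le\overline{r}^q$ (with $\underline{r}=\min_i r_i$), the piece $f_{q,u}$ contracts by $r_{q,u}\le\overline{r}^{qm}$, so that $\overline{r}^{q(m+1)}/r_{q,u}$ sits at scale $\overline{r}^q$ — exactly one further $\mathbf{\Lambda}_q$-generation, which is the scale appearing on the right-hand side.

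The delicate point, and the step I expect to be the real obstacle, is controlling the scale-matching and the error term \emph{uniformly in $m$ and $q$}. Because the ratios of the maps in $\mathbf{\Lambda}_q$ are only comparable to $\overline{r}^q$ rather than equal to it, the rescaled scale $\overline{r}^{q(m+1)}/r_{q,u}$ need not coincide with $\overline{r}^q$, and one must verify that the discrepancy contributes only $O_k(1)$ and pushes the comparison in the direction claimed; this is where the monotonicity of $H_r$ in $r$, together with the fact that we zoom by precisely one generation, is used. One must likewise ensure that the translation $t_{q,u}$ and the smooth dependence of $\mathcal{D}_r^{\pi}$ on $\pi$ do not generate an error growing with $m$, which is exactly what the continuous modification $\widetilde{H}_r$ secures by replacing discontinuous cube counting by a quantity with stable scaling and translation behaviour. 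Finally, the hypothesis $g\in G_q$ is what keeps $h_{q,u}g$ and $g$ in a single group, so that the whole comparison takes place along one $G_q$-orbit — the feature that later permits averaging over $u$ via the ergodicity of $F$ established in Lemma \ref{lem:ergodicity}.
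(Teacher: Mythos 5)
Your reduction is exactly the paper's: $\Phi_q\eta_{q,u}=f_{q,u}\nu$, so (after conjugating the IFS by $g$, which is how the paper reconciles $g h_{q,u}$ with $h_{q,u}g$ --- both lie in $G_q$, and the subsequent averaging over $\lambda_{G_q}$ makes the order immaterial) the measure $\pi g\Phi_q\eta_{q,u}$ is a translate of $r_{q,u}\cdot(\pi h g\nu)$, and the lemma reduces to the identity $H_s(r\cdot\mu+c)=H_{s/r}(\mu)+O_k(1)$ followed by a comparison of $H_{\overline{r}^{q(m+1)}/r_{q,u}}(\mu)$ with $H_{\overline{r}^q}(\mu)$. (The detour through $\widetilde{H}_r$ is unnecessary: the continuous modification of Lemma \ref{lem:cont} is needed for the uniform ergodic theorem in Proposition \ref{prop:application-local-entropy-avg}, not for the scaling identity, which holds for $H_r$ itself up to an $O_k(1)$ grid-alignment error.)

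The step you flag as ``the real obstacle'' is the entire content of the lemma, and your proposal does not close it; moreover the mechanism you invoke --- monotonicity of $H_r$ together with ``zooming by precisely one generation'' --- cannot close it as stated. From $r_{q,u}\le\overline{r}^{qm}$ one gets $s:=\overline{r}^{q(m+1)}/r_{q,u}\ge\overline{r}^q$, i.e.\ the rescaled scale lies on the \emph{coarse} side of $\overline{r}^q$, so monotonicity yields $H_{s}(\mu)\le H_{\overline{r}^q}(\mu)+O_k(1)$ --- the opposite of what is needed. The usable lower bound is $H_{s}(\mu)\ge H_{\overline{r}^q}(\mu)-k\log(s/\overline{r}^q)-O_k(1)$, and since each letter $v\in\mathcal{A}_q$ only satisfies $\overline{r}^q\underline{r}<r_v\le\overline{r}^q$ (with $\underline{r}=\min_i r_i$), the ratio $s/\overline{r}^q=\overline{r}^{qm}/r_{q,u}$ can be as large as $\underline{r}^{-m}$, so this error is linear in $m$ --- fatal, since $m\to\infty$ in the Ces\`{a}ro averages of Lemma \ref{lem:local-entropy-averages}. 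Concretely, if some $v\in\mathcal{A}_q$ has $r_v<\overline{r}^q$ strictly and $u=v^m$ with $m$ large, then $\pi g\Phi_q\eta_{q,u}$ is supported on a set of diameter $r_{q,u}\ll\overline{r}^{q(m+1)}$, its entropy at scale $\overline{r}^{q(m+1)}$ is $O_k(1)$, while the right-hand side grows linearly in $q$. For what it is worth, the paper's own proof consists of the same two displayed inequalities resting on $r_{q,u}\le\overline{r}^{qm}$ and ``standard properties of entropy,'' so you have reproduced its argument faithfully --- including at the one step where, for an inhomogeneous IFS, something more is required (for instance, working with the genuine cut-sets at scale $\overline{r}^{qm}$ at every level $m$, rather than the $m$-fold products $\mathcal{A}_q^m$, so that level-$m$ pieces have diameter within a factor $\underline{r}$ of $\overline{r}^{qm}$ uniformly in $m$). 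As written, deferring the verification to ``one must verify'' leaves the proof incomplete precisely where the difficulty lies.
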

\begin{proof}
    By the definition \eqref{eq:coding-map} of the coding map, for any $\omega\in\mathcal{A}_q^{\N}$,
    \[
        \Phi_q(u\omega) = r_{q,u} h_{q,u} \Phi_q(\omega) + t_{q,u}.
    \]
    Here the sub-index $q$ denotes that the quantities are associated to the IFS $\mathbf{\Lambda}_q$, and $t_{q,u}$ is some translation which is not important in what follows. We deduce that
    \[
        \pi\Phi_q(\eta_{q,u}) = r_{q,u} \cdot (\pi h_{q,u}) \nu + \pi t_{q,u},
    \]
    where abusing notation slightly, $r\cdot \mu$ denotes the measure $\mu$ scaled by $r$. Note that $r_{q,u}\le\overline{r}^{qm}$. Rescaling by $r_{q,u}^{-1}$ and applying standard properties of entropy (see \cite[\S 3.1]{hochman2014self}), we obtain
    \begin{align*}
        H_{\overline{r}^{q(m+1)}}\left(\pi \Phi_q\eta_{q,u} \right) & \ge H_{\overline{r}^{q(m+1)}}\left(r_{q,u}\cdot (\pi h_{q,u})\nu\right) - O_k(1) \\
                                                                    & \ge H_{\overline{r}^q}(\pi h_{q,u} \nu) - O_k(1)        .
    \end{align*}

    Since $g\nu$ is the self-similar measure for the conjugate IFS $\{ g f_u g^{-1}: u\in\mathcal{A}_q\}$,  with coding map $g\Phi_q$, the same argument applies in the general case.
\end{proof}

\subsection{Local entropy averages}

It is well known (see \cite[Theorem 1.3]{Fan2002measures}) that for any $\mu\in\mathcal{P}(\pi)$,
\begin{equation} \label{eq:entropy-vs-hausdorff}
    \dim\mu \le \liminf_{r\to 0} \frac{H_r(\mu)}{\log(1/r)}.
\end{equation}
The expression on the right-hand side is known as the (lower) entropy dimension of $\mu$. The \emph{local entropy averages} approach of Hochman-Shmerkin \cite{hochman2009local} provides a \emph{lower} bound for $\dim\pi\nu$ in terms of averages of the entropies of the projections of small pieces of $\nu$. The next Lemma formulates this in our context. To state it, we need some definitions from \cite[Section 4 and 5]{hochman2009local}.

\begin{definition}
    \begin{enumerate}
        \item   A \emph{tree} is an infinite graph with no cycles, bounded degree, and a distinguished vertex called the root. We identify the tree with infinite paths emanating from the root.

        \item     A \emph{tree morphism} is a continuous map between trees which preserves the root and the adjacency relation.

        \item Given a tree $X$ and $\rho>0$, we define a metric $d_{\rho}$ on $X$ by
              \[
                  d_{\rho}(x,y) = \rho^{n(x,y)},
              \]
              where $n(x,y)$ is the length of the longest common path from the root that is common to $x$ and $y$. We call $(X,d_{\rho})$ a \emph{$\rho$-tree}.

    \end{enumerate}

\end{definition}

We will apply the following Lemma to $F=\pi g\Phi_q$, but we state it more generally for clarity.
\begin{lemma}[Local entropy averages] \label{lem:local-entropy-averages}
    Let $q>0$ and   let us consider $\mathcal{A}_q^{\N}$ as an $\overline{r}^q$-tree. Let $F:\mathcal{A}_q^{\N}\to \pi$,  $\pi\in \Gamma_{d,k}$, be a $1$-Lipschitz map. Suppose that, for $\eta_q$-almost all $\omega$,
    \begin{equation} \label{eq:local-entropy-assumption}
        \liminf_{N\to\infty} \frac{1}{N} \sum_{j=1}^{N} \frac{H_{\overline{r}^{q.(j+1)}}(F\eta_{q,(\omega_1,\ldots,\omega_j)})}{q\log(1/\overline{r})} \ge \alpha.
    \end{equation}
    Then,
    \[
        \dim(F\eta_q)\ge \alpha - \frac{O_{k,\overline{r}}(1)}{q}.
    \]
\end{lemma}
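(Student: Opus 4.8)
The plan is to bound from below the lower local dimension of the image measure $F\eta_q$ at $F\omega$ for $\eta_q$-almost every $\omega$, and then invoke the characterization $\dim\mu=\text{ess-inf}_{z\sim\mu}\dim(\mu,z)$ from \eqref{Eq lower dim}: if $\dim(F\eta_q,F\omega)\ge\beta$ for $\eta_q$-a.e. $\omega$, then $F\eta_q\{z:\dim(F\eta_q,z)<\beta\}=\eta_q(F^{-1}\{\dim<\beta\})=0$, so $\dim F\eta_q\ge\beta$. Write $\mathcal D_j$ for the partition $\mathcal D^{\pi}_{\overline r^{qj}}$ and $\mathcal D_j(z)$ for the cube containing $z$. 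Since the scales $\overline r^{qj}$ are geometric with fixed ratio and cubes are comparable to balls, the local dimension at $F\omega$ equals $\liminf_N (qN\log(1/\overline r))^{-1}\sum_{j=1}^{N}\phi_j(\omega)$, where $\phi_j(\omega)=\log\big((F\eta_q)(\mathcal D_j(F\omega))/(F\eta_q)(\mathcal D_{j+1}(F\omega))\big)\ge 0$ arises from the telescoping identity $-\log(F\eta_q)(\mathcal D_{N+1}(F\omega))=\sum_{j=1}^N\phi_j(\omega)+O(1)$.

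The crux is the conditional estimate
\[
\mathbb E_{\eta_q}\!\big[\phi_j\mid\mathcal F_j\big](\omega)\ \ge\ H_{\overline r^{q(j+1)}}\!\big(F\eta_{q,\omega|_j}\big)-O_k(1),
\]
where $\mathcal F_j$ is generated by the depth-$j$ cylinders. Fix $A=[\omega|_j]$. Because $F$ is $1$-Lipschitz and $A$ has diameter $\le\overline r^{qj}$ in the $\overline r^q$-tree metric, $F(A)$ meets only $O_k(1)$ cubes of $\mathcal D_j$; discarding a bounded number of them (which produces the error $O_k(1)$) we may assume $F(A)$ lies in a single $Q_0\in\mathcal D_j$. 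Writing $\theta_A=F\eta_{q,\omega|_j}$, so that $\mathcal D_j(Fy)=Q_0$ for $y\in A$, one gets
\[
\mathbb E_{\eta_q}\!\big[\phi_j\mid\mathcal F_j\big](\omega)=\log (F\eta_q)(Q_0)-\sum_{Q'\in\mathcal D_{j+1},\,Q'\subseteq Q_0}\theta_A(Q')\log (F\eta_q)(Q').
\]
Subtracting $H_{\overline r^{q(j+1)}}(\theta_A)=-\sum_{Q'}\theta_A(Q')\log\theta_A(Q')$ and applying Jensen's inequality to $\log$ gives $\sum_{Q'}\theta_A(Q')\log\big((F\eta_q)(Q')/\theta_A(Q')\big)\le\log\sum_{Q'}(F\eta_q)(Q')\le\log (F\eta_q)(Q_0)$, whence the displayed inequality. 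This is precisely the step that absorbs overlaps between cylinders: although the global mass $(F\eta_q)(Q')$ may far exceed the single-cylinder contribution, the convexity bound keeps the conditional information above the intrinsic cylinder entropy.

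It remains to replace the Cesàro averages of $\phi_j$ by those of $\mathbb E[\phi_j\mid\mathcal F_j]$. Here I would first render $\phi_j$ measurable with respect to $\mathcal F_{j+1}$: since $F[\omega|_{j+1}]$ has diameter $\le\overline r^{q(j+1)}$, a standard translation of the grids $\mathcal D^{\pi}_{\overline r^{q\bullet}}$ (a finite family of shifts, choosing at each scale one for which $F[\omega|_{j+1}]$ avoids cube boundaries) places it inside a single cube of $\mathcal D_{j+1}$, at the cost of another $O_k(1)$ per step and without affecting the local dimension. Then $\Delta_j=\phi_j-\mathbb E[\phi_j\mid\mathcal F_j]$ is a martingale difference sequence for $(\mathcal F_{j+1})$. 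As $\phi_j$ is the conditional information of the refinement of $\mathcal D_j$ by $\mathcal D_{j+1}$, in which each cube splits into at most $\overline r^{-qk}$ subcubes, one has $\sup_j\|\phi_j\|_{L^2(\eta_q)}^2\le C(k,q,\overline r)<\infty$; thus $\sum_j j^{-2}\mathbb E[\Delta_j^2]<\infty$ and Chow's strong law yields $N^{-1}\sum_{j\le N}\Delta_j\to 0$ for $\eta_q$-a.e. $\omega$.

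Combining the three ingredients, for $\eta_q$-a.e. $\omega$ the local dimension $\dim(F\eta_q,F\omega)=\liminf_N(qN\log(1/\overline r))^{-1}\sum_{j=1}^N\phi_j(\omega)$ equals $\liminf_N(qN\log(1/\overline r))^{-1}\sum_{j=1}^N\mathbb E[\phi_j\mid\mathcal F_j](\omega)$, which by the crux estimate and the hypothesis \eqref{eq:local-entropy-assumption} is at least $\alpha-O_k(1)/(q\log(1/\overline r))=\alpha-O_{k,\overline r}(1)/q$; the essential-infimum characterization then gives $\dim F\eta_q\ge\alpha-O_{k,\overline r}(1)/q$. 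I expect the main obstacle to be the conditional-entropy comparison, since it must hold in the presence of arbitrary overlaps between the projected cylinders, with the measurability adjustment needed to run the martingale law a close second.
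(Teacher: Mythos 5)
Your architecture is a from-scratch reproof of Hochman--Shmerkin's local entropy averages theorem, whereas the paper simply factors $F=\Psi\circ F'$ through a tree morphism $F'$ into a $\rho$-tree via \cite[Theorem 5.4]{hochman2009local}, applies their Theorem 4.4 to $F'$, and transfers the bound back through the $C$-faithful map $\Psi$ using their Proposition 5.2. Your ``crux'' conditional-entropy estimate is essentially correct (and in fact the single-$Q_0$ reduction is unnecessary: summing Jensen over the $O_k(1)$ cubes of $\mathcal{D}_j$ met by $F([\omega|_j])$, using $\sum_{Q'\subseteq Q}(F\eta_q)(Q')\le (F\eta_q)(Q)$, gives the same $O_k(1)$ error directly). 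The telescoping identity and the appeal to \eqref{Eq lower dim} are also fine, modulo the usual nested-grid/ball-versus-cube bookkeeping.

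The genuine gap is the measurability step needed to run the martingale strong law. For $\Delta_j=\phi_j-\mathbb{E}[\phi_j\mid\mathcal{F}_j]$ to be a martingale difference sequence you need $\phi_j$ to be $\mathcal{F}_{j+1}$-measurable, i.e.\ you need $\mathcal{D}_{j+1}(F\omega)$ to depend only on $\omega|_{j+1}$. But $F([\omega|_{j+1}])$ has diameter \emph{comparable to} the side length $\overline{r}^{q(j+1)}$ of the cubes of $\mathcal{D}_{j+1}$, so a constant proportion of depth-$(j+1)$ cylinders will straddle cube boundaries for \emph{every} translate of the grid: averaging over translations, the expected $F\eta_q$-mass lying within distance $\overline{r}^{q(j+1)}$ of the grid boundary is of order $1$, not $o(1)$. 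Hence there is no single shift per scale that works for all (or even most) cylinders simultaneously, and your proposed fix fails as stated. This is precisely the obstruction that the faithful-map machinery of \cite[Section 5]{hochman2009local} is built to overcome: the tree morphism $F'$ sends each cylinder into a single cell of the $\rho$-tree by construction, the martingale argument is run there, and the dimension loss under $\Psi$ is controlled by faithfulness. To make your direct argument work you would need to either reconstruct that factorization or replace the pointwise identity $\phi_j=\mathbb{E}[\phi_j\mid\mathcal{F}_{j+1}]$-measurable by a genuinely different averaging device; as written, the step ``$\Delta_j$ is a martingale difference sequence'' is unjustified and the conclusion does not follow.
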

\begin{proof}
    Write $\rho=\overline{r}^q$ for simplicity. Recall the definition of $C$-faithful maps from \cite[Section 5]{hochman2009local}. By \cite[Theorem 5.4]{hochman2009local}, we can factor $F= \Psi\circ F'$, where $X$ is a $\rho$-tree, $F':\mathcal{A}_q^\N\to X$ is a tree morphism, and $\Psi:X\to \R^k$ is a $C$-faithful map, for some $C$ depending only on $k$. Moreover,
    \[
        \left| H_{\rho^{k+1}}(F\eta_{q,u}) -  H_{\rho^{k+1}}(F'\eta_{q,u}) \right| = O_k(1).
    \]
    It then follows from our assumption \eqref{eq:local-entropy-assumption} that
    \[
        \liminf_{N\to\infty} \frac{1}{N} \sum_{j=1}^{N} \frac{H_{\rho^{j+1}}(F'\eta_{q,(\omega_1,\ldots,\omega_j)})}{q\log(1/\overline{r})} \ge \alpha - O_{\overline{r},k}(1/q).
    \]
    Applying \cite[Theorem 4.4]{hochman2009local}, we deduce that
    \[
        \dim F'\eta_q \ge \alpha - O_{\overline{r},k}(1/q).
    \]
    Here $\dim$ is with respect to the $\rho$-tree metric. Finally, using that $\Psi$ is $C$-faithful, and \cite[Proposition 5.2]{hochman2009local}, we conclude that
    \[
        \dim F\eta_q = \dim \Psi (F'\eta_q) \ge \alpha - O_{\overline{r},k}(1/q) - O_{C,k,\overline{r}}(1/q).
    \]
    This was our claim.
\end{proof}

Combining the above estimate with the ergodic theorem, we obtain the following key result towards the proof of Theorem \ref{thm:constant-dim}.
\begin{proposition} \label{prop:application-local-entropy-avg}
    Let $\pi \in \Gamma_{d,k}$, $q>1$, and let
    \[
        E_q =E_q(G_q\cdot\pi):= \int \frac{\widetilde{H}_{{\overline{r}^q}}(\pi h\nu)}{q\log(1/\overline{r})} \, d\lambda_{G_q}(h).
    \]
    Then
    \[
        \dim\pi g\nu \ge  E_q - \frac{C}{q},
        \quad\text{for all } g\in G_q.
    \]
    The constant $C$ depends only on $k$ and the IFS $\mathbf{\Lambda}$; in particular, it does not depend on $q$.
\end{proposition}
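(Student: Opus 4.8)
The plan is to fix $g\in G_q$ and apply the local entropy averages estimate (Lemma \ref{lem:local-entropy-averages}) to the map $F=\pi g\Phi_q\colon \mathcal A_q^{\N}\to\pi$, viewing $\mathcal A_q^{\N}$ as an $\overline r^q$-tree. Since every symbol of $\mathcal A_q$ contracts by at most $\overline r^q$ and $\pi g$ is $1$-Lipschitz, $F$ is $\diam(K)$-Lipschitz in the tree metric; after the harmless normalization $\diam(K)\le 1$ (rescaling the IFS affects neither dimensions nor, up to an additive $O(1)$, the scale-$r$ entropies), $F$ becomes $1$-Lipschitz. Since $F\eta_q=\pi g\Phi_q\eta_q=\pi g\nu$, Lemma \ref{lem:local-entropy-averages} will output precisely $\dim\pi g\nu\ge \alpha-O_{k,\overline r}(1/q)$, so it remains to verify its hypothesis \eqref{eq:local-entropy-assumption} with $\alpha=E_q-O(1/q)$.

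I would next rewrite the $j$-th entropy term. Combining Lemma \ref{lem:scaled-entropy} (applied to $u_j=(\omega_1,\dots,\omega_j)\in\mathcal A_q^{j}$) with the continuous-modification bound of Lemma \ref{lem:cont} gives, for every $\omega$,
\[
 H_{\overline r^{q(j+1)}}(F\eta_{q,u_j})\ \ge\ \widetilde H_{\overline r^q}\!\big(\pi\,\theta_j\,\nu\big)-O_k(1),\qquad \theta_j:=h_{q,u_j}g=h_{\omega_1}\cdots h_{\omega_j}\,g=Y_j\,g,
\]
where $Y_j:=h_{\omega_1}\cdots h_{\omega_j}$ is the right random walk on $G_q$ driven by the i.i.d.\ steps $h_{\omega_i}$. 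Dividing by $q\log(1/\overline r)$ and averaging over $1\le j\le N$, the hypothesis of Lemma \ref{lem:local-entropy-averages} follows once I show that, for $\eta_q$-a.e.\ $\omega$,
\[
 \lim_{N\to\infty}\frac1N\sum_{j=1}^N\widetilde H_{\overline r^q}(\pi\theta_j\nu)=\int \widetilde H_{\overline r^q}(\pi h\nu)\,d\lambda_{G_q}(h)=q\log(1/\overline r)\,E_q .
\]
Here the observable $\varphi(h):=\widetilde H_{\overline r^q}(\pi h\nu)$ is continuous (hence bounded) on the compact group $G_q$, by Lemma \ref{lem:cont} together with the weak-$*$ continuity of $h\mapsto\pi h\nu$; and $\theta_j=Y_jg$ is a translate of the fibre coordinate of the $j$-th iterate of the skew product $(\omega,h)\mapsto(\sigma\omega,h\,h_{\omega_1})$ on $\mathcal A_q^{\N}\times G_q$, which is ergodic by the argument of Lemma \ref{lem:ergodicity} applied with $G_q$ (its elements $\{h_u:u\in\mathcal A_q\}$ generate a dense subgroup of $G_q$ and the base is the full shift).

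The main obstacle is that Birkhoff's theorem only yields the displayed limit for $\eta_q$-a.e.\ $\omega$ and $\lambda_{G_q}$-a.e.\ \emph{starting point}, whereas the Proposition requires \emph{every} $g\in G_q$. I would resolve this by a homogeneity bootstrap exploiting the bi-invariance of Haar measure. Applying Birkhoff to $\varphi$ and using Fubini, there is a single $g_0\in G_q$ for which $\tfrac1N\sum_{j\le N}\delta_{g_0 Y_j}\to\lambda_{G_q}$ weak-$*$ for $\eta_q$-a.e.\ $\omega$ (testing against a countable dense family in $C(G_q)$ so that the exceptional $\omega$-set is null). Left-translating by $g_0^{-1}$ and using left-invariance of $\lambda_{G_q}$ gives $\tfrac1N\sum_{j\le N}\delta_{Y_j}\to\lambda_{G_q}$ for a.e.\ $\omega$; then for \emph{every} $g$ and the same full-measure set of $\omega$, right-translation invariance yields $\tfrac1N\sum_{j\le N}\delta_{Y_jg}=R_g\big(\tfrac1N\sum_{j\le N}\delta_{Y_j}\big)\to R_g\lambda_{G_q}=\lambda_{G_q}$. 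Since $\varphi$ is continuous and bounded, the displayed limit holds for every $g\in G_q$ and a.e.\ $\omega$.

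It then remains to assemble the pieces: the limit above, together with the per-term loss $O_k(1)/(q\log(1/\overline r))=O_k(1/q)$ from Lemma \ref{lem:scaled-entropy} and Lemma \ref{lem:cont}, verifies \eqref{eq:local-entropy-assumption} with $\alpha=E_q-O_{k,\overline r}(1/q)$, and Lemma \ref{lem:local-entropy-averages} gives $\dim\pi g\nu\ge E_q-C/q$. Every error term absorbs only constants depending on $k$, on $\overline r$, and on $\diam(K)$, i.e.\ on $k$ and $\mathbf\Lambda$ but never on $q$, which gives the claimed $q$-uniformity of $C$. The only genuinely non-routine step is the every-$g$ upgrade via the homogeneity bootstrap; the Lipschitz normalization and the $q$-independence of the constants are bookkeeping.
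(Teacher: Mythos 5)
Your proposal is correct and follows the same skeleton as the paper's proof: reduce via Lemma \ref{lem:scaled-entropy} and Lemma \ref{lem:cont} to a Birkhoff average of the continuous observable $h\mapsto\widetilde H_{\overline{r}^q}(\pi h\nu)$ along the ergodic skew product (Lemma \ref{lem:ergodicity} applied to $\mathbf{\Lambda}_q$), identify the limit with $E_q$, and conclude with Lemma \ref{lem:local-entropy-averages} after the harmless $1$-Lipschitz normalization. The one place you genuinely diverge is the upgrade from ``$\lambda_{G_q}$-a.e.\ $g$'' to ``every $g\in G_q$'': the paper invokes \cite[Lemma 3.3]{Galicer2016Lq}, which gives convergence of the fiber averages \emph{uniformly} in the group variable at every generic base point, whereas you give a self-contained bootstrap from the bi-invariance of Haar measure on the compact group $G_q$ (find one good starting point $g_0$ via Fubini and a countable dense family in $C(G_q)$, left-translate by $g_0^{-1}$, then right-translate by an arbitrary $g$, keeping the exceptional $\omega$-set independent of $g$). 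Your version yields only pointwise-in-$g$ convergence rather than uniform, but that is all the Proposition requires, since Lemma \ref{lem:local-entropy-averages} is applied separately for each fixed $g$; what your route buys is an elementary, citation-free treatment of this step, at the cost of not recovering the uniformity the paper records in passing.
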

\begin{proof}
    To begin, note that by the translation-invariance of Haar measure, indeed $E_q$ depends only on the orbit $G_q\cdot\pi$ and not the specific projection $\pi$ in this orbit.

    By Lemma \ref{lem:ergodicity} applied to $\mathbf{\Lambda}_q$, the product measure $\eta_q\times \lambda_{G_q}$ is ergodic under the skew product $F_q:(\omega,g)\mapsto (\sigma\omega, h_{\omega_1}g)$. By \cite[Lemma 3.3]{Galicer2016Lq}, for all $\eta_q$-generic points $\omega$ (in particular, for $\eta_q$-almost all $\omega$),
    \begin{equation*} \label{eq:entropy-avg-ergodic}
        \lim_{N\to\infty} \frac{1}{N} \sum_{j=1}^{N} \frac{\widetilde{H}_{{\overline{r}^{q}}}(\pi h_{\omega_1}  \cdots h_{\omega_j} g\nu)}{q \log(1/\overline{r})} = E_q, \quad\text{uniformly in } g\in G_q.
    \end{equation*}
    Indeed, this holds since the function $(\omega,g)\to \widetilde{H}_{{\overline{r}^q}}(\pi g\nu)$ is continuous by Lemma \ref{lem:cont}. Since, by Lemma \ref{lem:cont} again, $H_{\rho}$ and $\widetilde{H}_{\rho}$ differ by a constant, we deduce from Lemma \ref{lem:scaled-entropy} that for $\eta_q$-almost all $\omega$,
    \[
        \liminf_{N\to\infty}\frac{1}{N}\sum_{j=1}^N \frac{H_{\overline{r}^{q(j+1)}}(\pi g \Phi_q \eta_{q,(\omega_1,\ldots,\omega_j)})}{q\log(1/\overline{r})} \ge E_q - O_{k,\overline{r}}(1/q),
    \]
    uniformly in $g\in G_q$. The function $\pi g\Phi_q$ is Lipschitz, and we can make it $1$-Lipschitz by a harmless rescaling of the original IFS. Since $\nu=\Phi_q \eta_q$, the conclusion follows from Lemma \ref{lem:local-entropy-averages}.
\end{proof}

\subsection{Proof of Theorem \ref{thm:constant-dim}}
We are now able to deduce the main result of this Section, Theorem \ref{thm:constant-dim}. We follow the proof of \cite[Theorem 8.2]{hochman2009local} with suitable modifications. Let $J\subseteq \mathbb{N}$ be the index set given in Lemma \ref{lem:groups-Gq}. Let $G'=G_q$, $q\in J$, be the  group $G_q$ that does not depend on $q$, provided by the Lemma.

It follows from Proposition \ref{prop:application-local-entropy-avg} that
\[
    \dim\pi g \nu \geq \limsup_{q\in J} E_q \quad\text{for all }g\in G'.
\]
On the other hand, by Fatou's Lemma and \eqref{eq:entropy-vs-hausdorff}
\begin{align*}
    \liminf_{q\in J} E_q \geq \int \liminf_{q\in J} \frac{\widetilde{H}_{{\overline{r}^q}}(\pi h\nu)}{q\log(1/\overline{r})} \, d\lambda_{G'}(h) \ge \int \dim \left( \pi h\nu \right) \, d\lambda_{G'}(h)=:\alpha.
\end{align*}
Combining the above inequalities, we obtain the claim with $G'$ in place of $G$. Since $G'\supset G^{\circ}$ and $\lambda_{G^{\circ}}\ll \lambda_{G'}$, the claim also holds with $G^{\circ}$ in place of $G$; we will now extend it to $G$.

Recall that $G/G^{\circ}$ is the finite set of connected components of $G$. As usual, let $H$ be the semigroup generated by $h_1,\ldots,h_n$. Since $H$ is dense in $G$, it intersects all connected components. Fix such a component $G^{\circ}\cdot h$, $h\in H$. By self-similarity of $\nu$, there exist $p\in (0,1)$ and a measure $\nu'$ such that
\[
    \nu = p h \nu + (1-p)\nu'.
\]
In particular, $h\nu \ll \nu$, and therefore $\pi g h\nu \ll \pi g\nu$ for all $g\in G^{\circ}$. By definition of Hausdorff dimension, this implies that
\[
    \dim \pi g h\nu \geq \dim \pi g\nu \ge \alpha, \quad g\in G^{\circ}.
\]
Applying the same reasoning to $h^{-1}\nu$ in place of $\nu$, and using that $H^{-1}$ also intersects all connected components, we obtain
\[
    \dim \pi g h\nu \leq \dim \pi g\nu = \alpha \quad\text{for $\lambda_{G^{\circ}}$-a.e. $g\in G^{\circ}$}.
\]
Combining the last two displayed equations completes the proof. \hfill{$\Box$}

\section{Proof of Theorem \ref{Main Tech theorem} and its corollaries} \label{Section proof}

\begin{proof}[Proof of Theorem \ref{Main Tech theorem}]

    Let $\nu$ be a self-similar measure on $\mathbb{R}^d$, and as usual let $G$ denote the closed group generated by the orthogonal parts of the functions in the IFS. Fix $\pi \in \Gamma_{d,k}$.

    It was proved by Feng-Hu \cite[Theorem 2.8]{feng2009dimension} that $\nu$ is exact dimensional, that is,
    \[
        \lim_{r\to 0} \frac{\log\nu(B(x,r))}{\log r} = \dim \nu \quad \text{for $\nu$-a.e. } x\in\R^d.
    \]
    Note that $B(x,r)\subset \pi^{-1}\pi(B(x,r))$ and therefore $\nu(B(x,r))\le \pi\nu(B(\pi x,r))$. It follows that
    \[
        \limsup_{r\to 0} \frac{\log \pi\nu(B(y,r))}{\log r} \le \min \{ k, \dim \nu\}  \quad \text{for $\pi \nu$-a.e. } y\in \pi.
    \]
    Hence, it is enough to show that the value of $\alpha$ in Theorem \ref{thm:constant-dim} equals $\min \{ k, \dim \nu\}$. For this, it is enough to show that
    \begin{equation} \label{eq:dim-claim-to-prove}
        \dim \pi g\nu = \min \lbrace k,\, \dim \nu \rbrace \quad\text{for $\lambda_G$-almost all } g\in G.
    \end{equation}

    Let $\gamma:(-\e,\e)\to\R^d$ be a $G\cdot\pi$-adapted (in particular, non-degenerate) curve.  Restricting the domain if needed, we can write $\gamma = \sigma\cdot \pi$ for some smooth curve $\sigma:(-\e,\e)\to G$. Note that $g\cdot \gamma = g\sigma\cdot \pi$ is also $G\cdot\pi$-adapted for all $g\in G$.

    As in Section \ref{Section GGW}, let $\Pi_{\theta}^{(k)}$ denote the plane spanned by the first $k$ derivatives of $\gamma$. By Theorem \ref{Theorem Gan measures}, for each $g\in G$,
    \begin{equation} \label{eq:application-GGW}
        \dim \Pi_\theta^{(k)}g \nu = \min \lbrace k,\, \dim \nu \rbrace \quad \text{for $m$-a.e. } \theta\in (-\e,\e),
    \end{equation}
    where $m$ is normalized Lebesgue measure. The push-forward of $m\times \lambda_G$ under $(\theta,g)\mapsto g\sigma(\theta)$ is invariant under left-translation and so equals $\lambda_G$. Since the osculating plane for $g\cdot \gamma=g\sigma\cdot \pi$ at $\theta$ is $g\cdot \Pi_\theta^{(k)}$, combining \eqref{eq:application-GGW} with Fubini's Theorem, we conclude that \eqref{eq:dim-claim-to-prove} holds. The proof is complete.
\end{proof}

$$ $$

\begin{proof}[Proof of Corollary \ref{cor:proj-dim-codim-1}]

    To begin, recall the Frenet-Serret formula of a curve $\gamma:(-\e,\e)\rightarrow\R^d$ parametrized by arc length, such that $\{ \gamma^{(j)}\}_{j=1}^{d-1}$ are linearly independent:
    \begin{equation} \label{eq:FrenetSerret}
        \begin{bmatrix}
            e_1'   \\
            e_2'   \\
            e_3'   \\
            \vdots \\
            e_d'
        \end{bmatrix}
        =
        \begin{bmatrix}
            0         & \kappa_1  & 0        & \cdots        & 0            \\
            -\kappa_1 & 0         & \kappa_2 & \cdots        & 0            \\
            0         & -\kappa_2 & 0        & \cdots        & 0            \\
            \vdots    & \vdots    & \vdots   & \ddots        & \kappa_{d-1} \\
            0         & 0         & 0        & -\kappa_{d-1} & 0
        \end{bmatrix}
        \begin{bmatrix}
            e_1    \\
            e_2    \\
            e_3    \\
            \vdots \\
            e_d
        \end{bmatrix}
    \end{equation}
    Here, $(e_1,\ldots,e_d)$ is the Frenet-Serret frame of $\gamma$, and $(\kappa_j)_{j=1}^{d-1}$ are the curvature scalars. See \cite[\S 1.7]
    {abatetovena2012} for more details.

    Consider first the projection onto $\pi$. Let $v$ be a unit vector generating $\pi$. Consider the orbit $G^{\circ}.v$ of the action of $G^{\circ}$ on $S^{d-1}$; this is a smooth connected manifold that, by assumption, is not contained in any linear hyperplane. Let $\dot\gamma:\R\rightarrow G^{\circ}.v$ be any smooth curve in $G^{\circ}.v$ parametrized by arc-length that is not contained in any linear hyperplane (e.g. take a smooth, unit speed curve through $d$ linearly independent points). The curve $\gamma$ itself is obtained by integrating $\dot\gamma$ starting from $0$, but it plays no role in the argument.

    We claim that $\gamma$ is non-degenerate. Arguing by induction in the first dimension $\le d$ for which $\gamma$ is degenerate, we may assume that the first $d-1$ derivatives of $\gamma$ are linearly independent. Assume the torsion $\kappa_{d-1}$ is identically zero. Then $\dot e_d=0$ by \eqref{eq:FrenetSerret}, i.e. $e_d$ is constant. But $\dot \gamma \cdot e_d=0$, so this contradicts the fact that $\dot \gamma$ is not contained in a linear hyperplane. Hence, $\kappa_{d-1}$ is not identically zero and (after passing to a sub-curve if needed) this gives non-degeneracy using \eqref{eq:FrenetSerret} again. It follows that $\gamma$ is $\pi G$-adapted.

    For the projection onto $\pi^\perp$, let $\gamma$ be the same curve as before, with Frenet-Serret frame $e_1,\ldots,e_d$, and curvatures $\kappa_1,\ldots,\kappa_{d-1}$, all of which are non-vanishing. Recall that all curvatures are positive except possibly $\kappa_{d-1}$. Replacing $\gamma$ by $-\gamma$ if needed (this does not affect any of the relevant properties of the curve), we may assume that also $\kappa_{d-1}>0$. It is a simple matter to check that the orthonormal frame $\tilde{e}_i = (-1)^{i+1} e_{d+1-i}$ satisfies the Frenet-Serret equation \eqref{eq:FrenetSerret} with curvatures $\tilde{\kappa}_i = \kappa_{d+1-i}$. Let $\tilde{\gamma}$ be the curve with Frenet-Serret frame $\tilde{e}_1,\ldots,\tilde{e}_d$ and curvatures $\tilde{\kappa}_1,\ldots,\tilde{\kappa}_{d-1}$ (the curve is obtained by integrating $\tilde{e}_1$). Then $\tilde{\gamma}$ is non-degenerate, and the linear span of its first $d-1$ derivatives is $e_1^\perp$. By construction, $e_1(s) \in G\pi$ for all $s$, and since $G\le \mathbb{O}(\R^d)$, also $e_1^{\perp}(\theta)\in G\pi^{\perp}$ for all $s$. It follows that $\tilde{\gamma}$ is $ G\cdot \pi^{\perp}$-adapted.

    In both cases, the claim now follows from Theorem \ref{Main Tech theorem}.

\end{proof}
$$ $$

\begin{proof}[Proof of Corollary \ref{cor:one-parameter}]

    Let $\dot\gamma(t)= e^{t A}v$. Then $\gamma^{(j)}(t) = e^{t A}A^{j-1} v$. The assumption that $v$ is cyclic for $A$ implies that $\gamma$ is non-degenerate. Since $\{ e^{t A}:t\in\R\}\subset G$, the curve $\gamma$ is also $G\cdot \pi_k$-adapted. The claim follows from Theorem \ref{Main Tech theorem}.
\end{proof}

\bibliography{bib}
\bibliographystyle{plain}
\end{document}